 \newtheorem{thm}{Theorem}[section]
 \newtheorem{cor}[thm]{Corollary}
 \newtheorem{lem}[thm]{Lemma}
 \newtheorem{prop}[thm]{Proposition}
 \theoremstyle{definition}
 \newtheorem{defn}[thm]{Definition}
 \theoremstyle{remark}
 \newtheorem{rem}[thm]{Remark}
 \numberwithin{equation}{section}
\newcommand{\norm}[1]{\left\Vert#1\right\Vert} \newcommand{\scal}[1]{\left<#1\right>}
  \newcommand{\R}{\mathbb{R}}  \newcommand{\C}{\mathbb{C}}
\newcommand{\Hq}{\mathbb{H}}  \newcommand{\D}{\mathbb{D}} \newcommand{\BC}{\mathbb{T}} 
  \newcommand{\FBC}{\mathcal{F}^{2,\nu }(\BC)} 
  \newcommand{\BargBC}{\mathcal{B}^{\sigma,\nu}_{\BC}}
 \newcommand{\eu}{{e_+}}  \newcommand{\es}{{e_-}}
\newcommand{\Hbc}{\mathcal{H}^{2,\nu}(\BC)}
\newcommand{\Fbci}{\mathcal{F}^{2,\nu}_i(\BC)}
\newcommand{\Fbc}{\mathcal{F}^{2,\nu}(\BC)}
\newcommand{\FCh}{\mathcal{F}^{2,\frac\nu2}(\C)}
\newcommand{\FChT}{\mathcal{F}^{2,\frac\nu2}_{\BC}(\C)}
\newcommand{\Ldc}{L^{2,\sigma}_{\R}(\D)}
\begin{document}

%
%
%
%
%
%
%
%
%

\title[$\BC$--Segal-Bargmann and $\BC$--fractional Fourier transforms]
 {Bicomplex analogs of Segal-Bargmann and fractional Fourier transforms}

\author[A. Ghanmi]{Allal Ghanmi}
\address{A.-P.D.E.-G.S., CeReMAR,\\
          Department of Mathematics, P.O. Box 1014,  Faculty of Sciences, \\
          Mohammed V University in Rabat, \\
          Morocco}

\email{ag@fsr.ac.ma}
\email{zine0khalil@gmail.com}

\author[K. Zine]{Khalil Zine}
\address{A.-P.D.E.-G.S., CeReMAR,\\
          Department of Mathematics, P.O. Box 1014,  Faculty of Sciences, \\
          Mohammed V University in Rabat, \\
          Morocco}

\email{zine0khalil@gmail.com}
\subjclass{Primary 30G35, 46C05, 44A15; Secondary 32A25, 32W50, 32A10}

\keywords{Bicomplex numbers, $\BC$-holomorphic functions, $\BC$-Bargmann space, $\BC$-Segal--Bargmann transform, $\BC$-Fractional Fourier transform}

\date{November 06, 2018}

\begin{abstract}
 We consider and discuss some basic properties of the bicomplex analog of the classical Bargmann space. The explicit expression of the integral operator connecting the complex and bicomplex Bargmann spaces is also given. The corresponding bicomplex Segal--Bargmann transform is introduced and studied as well. Its explicit expression as well as the one of its inverse are then used to introduce a class of two--parameter bicomplex Fourier transforms (bicomplex fractional Fourier transform).
 This approach is convenient in exploring some useful properties of this bicomplex fractional Fourier transform.
\end{abstract}

\maketitle

\section{Introduction} \label{s1}
The bicomplex (or Tetra) numbers (denoted here by $\BC$) are a special generalization of the complex numbers. Roughly speaking, they are complex numbers with complex coefficients. Since their introduction by Segre \cite{Segre1892}, different mathematical topics have been developed and investigated, including bicomplex functional analysis \cite{Dragoni,Spampinato1935a,Spampinato1935b,GLMRochon2011a,LunaShapiroStruppaVajiacBook2015}, bicomplex differentiability \cite{Price1991,AlpayLunaShapiroStruppa2014Book,LunaShapiroStruppaVajiacBook2015} and bicomplex quantum \cite{Davenport1978,RochonTremmblay2004,RochonTremmblay2006,
	GLMRochon2010AFA,MathieuMarchildonRochon2012}.
  For a complete treatment on bicomplex function theory as well as their applications see, e.g., \cite{Price1991,Ryan14,Ryan15,Rochon2008,ColomboSabadiniStruppaVajiacVajiac2011a,
  ColomboSabadiniStruppaVajiacVajiac2011b,AlpayLunaShapiroStruppa2014Book,LunaShapiroStruppaVajiacBook2015}.

   The main purpose of the present paper is to introduce and study in some detail the basic properties of interesting integral transforms in the framework of some infinite bicomplex Hilbert spaces. We begin by considering the bicomplex Bargmann space $\FBC$ and the bicomplex Segal--Bargmann transform $\BargBC$ in the context of the bicomplex holomorphic functions, the analogs of the classical ones $\mathcal{F}^{2,\gamma }(\C)$ and $\mathcal{B}^{\sigma,\gamma }_{\C}$ for holomorphic functions. The first result concerning $\FBC$ is obtained as immediate corollary of the key observation (Theorem
 \ref{MainThm1}). It decomposes $\FBC$ in terms of the classical ones with respect to the idempotent decomposition of bicomplex holomorphic functions.
    We next deal with the integral representations of the $L^2$--bicomplex holomorphic functions. Namely, Theorem \ref{MainThm41} expresses the fact that $\FBC$ is a reproducing kernel bicomplex Hilbert space.
     On the other hand, Theorem \ref{MainThm4} connects $\FCh$ to the special subspace of $\FBC$ leaving $\C_i:=\C+j\{0\}$ invariant, through a special explicit integral transform.
  This transform is further a surjection from the space of $L^2$--holomorphic functions on the complex plane $\C$ with values in $\BC$ onto the bicomplex Bargmann space $\Fbc$.
    Concerning the bicomplex Segal--Bargmann transform $\BargBC$, we show that it is a unitary isometric transform from the space of bicomplex-valued square integrable functions on the real line onto the bicomplex holomorphic Bargmann space (Theorem \ref{MainThm2}). 
   We give the explicit expression of the inverse $[\BargBC]^{-1}$ 
   (Theorem \ref{MainThm2inverse}). 
     Then, we use both $\BargBC$ and $[\BargBC]^{-1}$ to introduce a two--parameter family of bicomplex fractional Fourier transforms (BFrFT) labeled by the set of bicomplex numbers $\theta$; $|\theta|=1$, $\theta\ne \pm 1,\pm ij$, so that one recovers the classical $i$-Fourier transform as well as its variant; the $j$-Fourier transform. The basic properties of BFrFT, such as the uniqueness theorem, the Plancherel theorem as well as the inversion formula are obtained.

  The content of the paper can be described as follows.
  We collect in Section 2 some needed backgrounds on bicomplex numbers as well as on the bicomplex holomorphic functions.
  We also review the definition and the basic properties of infinite bicomplex Hilbert spaces and provide a basic example of them.
    In Section 3, we establish the main results concerning the bicomplex Bargmann space.
    We next present in Section 4 two integral reproducing properties of the $L^2$--bicomplex holomorphic functions.
    We devote Section 5 to the bicomplex Segal--Bargmann transform $\BargBC$ and to its basic properties.
    While Section 6 is concerned with a class of bicomplex fractional Fourier transforms.
 
\begin{rem}
In the present paper, we deal with the Hilbert spaces $L^{2,\alpha}(X)$ of all square integrable $\C$--valued functions on $X=\R,\C, \C^2$ with respect to the Gaussian measure ${c^\alpha_{d_X}}e^{-\alpha\norm{u}^2_X}d\lambda_X(u)$.
The analog Hilbert spaces on $\BC$ or with values in $\BC$ are appropriately defined in Sections 2, 3 and 4.
 The $d_X$ in $c_{d_X}^\alpha$ can be interpreted as the complex dimension of $X$,
  so that $d_X=0,1,2$ for $X=\R,\C,\C^2$ respectively.
The normalization $c^\alpha_{d_X}$ varies from one Hilbert space to another and is taken such that $\int_X c^\alpha_{d_X}e^{-\alpha\norm{u}^2_X}d\lambda_X(u)=1$. Thus, we have
$$ c^\alpha_{\BC} = c^\alpha_{2} =  4 c^{\alpha/2}_{2} = (c^\alpha_{1})^2 = 4  (c^{\alpha/2}_{1})^2 =(c^\alpha_{0})^4 =\left(\frac{\alpha}{\pi}\right)^2.$$
\end{rem}

    \section{Preliminaries}
    This section reviews the needed notions and results from the theory of bicomplex holomorphic functions and bicomplex Hilbert spaces. For more details, we refer the reader to \cite{Price1991,RochonShapiro2004,RochonTremmblay2004,RochonTremmblay2006}. We begin by recalling some basic definitions related to bicomplex numbers.

        \subsection{Bicomplex (or Tetra) numbers}
   They are a special generalization of complex numbers, $z=x+iy$; $x,y\in  \R$, $i^2=-1$, by means of entities specified by four real numbers.
   In abstract algebra, a bicomplex number is a pair $(z_1,z_2)$ of complex numbers constructed by the Cayley-Dickson process that defines the bicomplex conjugate $(z_1,-z_2)$.
 More precisely,
$$ \BC := \left\{  Z= z_1 + j z_2 ; \, z_1,z_2 \in \C \right\},$$
    where $j$ is a pure imaginary unit independent of $i$ such that $ij=ji$.
    Unlike the quaternions, the bicomplex numbers form a commutative algebra. 
    Addition and multiplication in $\BC$ are defined in a natural way.
    The complex conjugate of $Z= z_1 + j z_2\in \BC$ with respect to $j$ is given by
    $Z^\dag= z_1 - j z_2.$
    However, there are other forms of complex conjugates, to wit $\widetilde{Z}= \overline{z_1} + j \overline{z_2}$ and $Z^*= \overline{z_1} - j \overline{z_2}$.
     According to the nullity of $ ZZ^\dag =z_1^2+z_2^2$, we distinguish two interesting classes of bicomplex numbers.
   Indeed, $ZZ^{\dag}\ne 0$ characterizes those that are invertible. While $ZZ^{\dag} = 0$ is equivalent to $Z= \lambda (1\pm ij )$ for certain complex number $\lambda\in \C=\C_i$ and characterizes those that are zero divisors in $\BC$. Thus, one considers the idempotent elements
$$ \eu  = \frac{1+ij}2 \quad \mbox{and} \quad \es  = \frac{1-ij}2$$
   which satisfy the identities
   $$ \eu ^2=\eu , \quad \es ^2=\es , \quad \eu +\es  =1 , \quad \eu -\es  =ij, \quad \eu \es =0.$$
   The last identity shows in particular that $\BC$ is not a division algebra and that $\eu =\frac 12(1,i)$ and $\es =\frac 12(1,-i)$  are orthogonal with respect to the Euclidean inner product in $\C^2$.
   Thus, any $Z= z_1 + j z_2\in \BC$ can be rewritten in a unique way as
\begin{align}\label{ib}
 Z= (z_1 - i z_2) \eu  +  (z_1 + i z_2) \es  = \alpha \eu  +  \beta \es
\end{align}
  with $\alpha=z_1 - i z_2,\beta=z_1 + i z_2\in \C$.  Therefore, it is immediate to check that the $Z^\dag$-conjugate, the $\widetilde{Z}$-conjugate and the $Z^*$-conjugate read respectively $$Z^\dag=\beta \eu  + \alpha \es , \quad \widetilde{Z}=\overline{\beta}\eu  + \overline{\alpha}\es  \quad\mbox{and} \quad Z^*= \overline{\alpha}\eu  + \overline{\beta}\es .$$
   The idempotent representation \eqref{ib} is a central observation that simplifies considerably the computation with bicomplex numbers and reduces them to complex numbers.
   This reduction is possible thanks to the identity $\eu \es =0$.
   Thus, for given $Z=\alpha \eu  + \beta \es $ and $W=\alpha' \eu  + \beta' \es $, we have $ ZW = \alpha\alpha' \eu  + \beta \beta' \es  $ as well as 
   $Z^n = \alpha^n \eu  +  \beta^n \es  $.
    Accordingly, the exponential $e^Z$ of a bicomplex number $Z$, as specified by \eqref{ib}, can be defined by
$$e^Z = e^{\alpha \eu + \beta \es}= e^{\alpha} \eu  +  e^{\beta} \es .$$
    Further elementary functions are introduced and studied in \cite{RochonShapiro2004,AlpayLunaShapiroStruppa2014Book,LunaShapiroStruppaVajiacBook2015}.
    More details on some algebraic properties can be found in \cite{Price1991,RochonTremmblay2006}.

  \subsection{Bicomplex holomorphic functions}
   Following \cite{Price1991}, a $\BC$--valued function $f=f_1 +j f_2 $ on an open set $\Omega\subset \BC$ is said to be $\BC$-holomorphic at a point $Z_0\in\BC$ if it admits a bicomplex derivative at $Z_0$, i.e., if the limit
$$
\lim\limits_{\substack{ H \to 0 \\ H \notin \mathcal{NC}}} \frac{f(Z_0 + H) - f(Z_0)}{H}
$$
   exists and is finite, where $\mathcal{NC}$ denotes the null cone of bicomplex numbers defined by
   \begin{align*}
   \mathcal{NC}&=\{Z=z_1+jz_2 \in \BC; \, z_1^2+z_2^2=0\}\\&= \{Z=\alpha \eu  + \beta \es ; \, \alpha \beta = 0\}.
   \end{align*}
   This is equivalent to say that the $\C$--valued functions $f_1$ and $f_2$ are holomorphic in the variables $(z_1,z_2)$ with $Z=z_1+jz_2$ and satisfy the Cauchy-Riemann system
$$
\frac{\partial f_1}{\partial z_1} = \frac{\partial f_2}{\partial z_2}  \quad \mbox{and} \quad \frac{\partial f_1}{\partial z_2} = - \frac{\partial f_2}{\partial z_1} ,
$$
which we can rewrite in matrix representation as
$$
\left( \begin{array}{cc} \partial_{z_1}   & - \partial_{z_2}  \\ \partial_{z_2}   &  \partial_{z_1} \end{array}\right)
\left( \begin{array}{c} f_1  \\ f_2 \end{array}\right)= \left( \begin{array}{c} 0  \\ 0 \end{array}\right).
$$
   Here, the shorthand $\partial_z$ is used to mean the differential operator $\partial/\partial z$.
   The following characterization of $\BC$-holomorphicity is given in \cite{Rochon2008} and shows that bicomplex holomorphic functions are once again solutions of a linear system of differential equations with constant coefficients. Namely, a given $f\in \mathcal{C}^1(\Omega)$ is $\BC$-holomorphic on $\Omega$ if and only if $f$ satisfies the following three systems of differential equations
$$
\frac{\partial f}{\partial Z^{*}} = \frac{\partial f}{\partial Z^{\dag}}  = \frac{\partial f}{\partial \widetilde{Z}}  = 0,
$$
    where
$$
\frac{\partial}{\partial Z^{*}} = \frac{\partial }{\partial \overline {z_1}} + j  \frac{\partial }{\partial \overline {z_2}}   ;
\qquad \frac{\partial}{\partial Z^{\dag}} =  \frac{\partial }{\partial z_1} + j  \frac{\partial }{\partial z_2} ;
\qquad \frac{\partial f}{\partial \widetilde{Z}} = \frac{\partial }{\partial \overline {z_1}} - j  \frac{\partial }{\partial \overline {z_2}}  .
$$
   The above system is the foundation pillar for the theory of bicomplex holomorphic functions.
    Accordingly, any bicomplex holomorphic $\BC$--valued function $f$ is of the form \cite[Theorem 15.5]{Price1991}
\begin{align}\label{decomp:holom}
f(Z)=f(\alpha \eu  + \beta \es ) = \phi^+(\alpha) \eu  + \phi^-(\beta) \es ,
\end{align}
    where $\phi^\pm:\C \longrightarrow \C$ are holomorphic functions on $\C$.
     This is a key tool that we use in proving Theorem \ref{MainThm1} below. We denote by $\mathcal{BH}ol(\BC)$ the space of bicomplex holomorphic functions on $\BC$ taking their values in $\BC$.

\subsection{Infinite bicomplex Hilbert space}
  In order to define Hilbert space in the bicomplex setting, one needs to extend the notions of inner product and norm to the $\BC$-modules.
  Let $M$ be a $\BC$-module and consider the $\C$-vector spaces $V^+=M\eu$ and $V^- = M \es$.  Thus, $M$ can be seen as a $\C$-vector space by considering $M'=V^+\oplus V^-$. In general, $V^+$ and $V^-$ bear no structural similarities.

   According to \cite{RochonTremmblay2004}, an inner product on $M$ is a given functional $\scal{ \cdot, \cdot } : M\times M \longrightarrow \BC$ satisfying
  $\scal{\phi, \tau\psi + \varphi }  = \tau^{*} \scal{\phi,\psi} + \scal{\phi,\varphi }$
  for every $\tau\in \BC$ and $\phi, \psi , \varphi \in M$, and
  $ \scal{\phi, \psi } = \scal{\psi, \phi }^*$
  as well as $ \scal{\phi, \phi}=0$ if and only if $\phi=0$.
  It should be mentioned here that the projection $\scal{ \cdot, \cdot }_{V^\pm}$ of $\scal{ \cdot, \cdot }$ to each $V^\pm$ is a standard scalar product on $V^\pm$. More precisely, if $\phi, \varphi$ are in $ M $ identified to $ V^+ \oplus V^-$, we have
$$
\scal{ \phi, \varphi } =  \scal{\phi^+, \varphi^+}_{V^+} \eu   + \scal{\phi^-, \varphi^-}_{V^-} \es  ,
$$
  where $\psi^+:= \psi \eu \in V^+$ and $\psi^-:= \psi \es \in V^-$ for given $\psi\in M$.
  Notice that any $\BC$-scaler product on $M$ is completely determined in this way (see \cite[Theorem 2.6]{GLMRochon2010AFA}).
  A trivial example of a $\BC$-inner product on $M=\BC$ is the following
\begin{align}\label{spBC}
\scal{ Z,W}  = ZW^{*} = \alpha \overline{\alpha'} \eu  + \beta \overline{\beta'}\es  ,
\end{align}
  where  $ Z= \alpha \eu  + \beta \es $ and $W=\alpha' \eu  + \beta' \es $ are the idempotent representations of $Z$ and $W$ in $\BC$, respectively.

   A $\BC$-module $M$ is said to be a normed $\BC$-module if there exists a map $\norm{\cdot}: M\longrightarrow \mathbb{R}$ such that
\begin{enumerate}
\item $\norm{\cdot}$ is a norm on the vector space $V_{1} \oplus V_{2}$, and
\item $\norm{ \lambda \phi } \leq \sqrt{2} |\lambda| \norm{\phi}$ for all $\lambda \in \BC$ and all $\phi \in  M$.
\end{enumerate}
   $\norm{\cdot}$ is then called a $\BC$-norm on $M$.
   As in the theory of $\C$-vector spaces, a $\BC$-norm can always be induced from a $\BC$-scalar product by considering
\begin{align}\label{inducednorm}
\norm{\phi}^2 =  \frac{ \scal{\phi^+, \phi^+}_{V^+} + \scal{\phi^-, \phi^-}_{V^-} }{2}    = \left| \scal{\phi, \phi}\right|,
\end{align}
    where $\phi=\phi^+ + \phi^- $ and the modulus $|\cdot|$ denotes the usual Euclidean norm of $Z$ in $\mathbb{R}^{4}$ given by
\begin{align}\label{euclideannorm}
|Z|^{2}=|z_{1}|^{2}+|z_{2}|^{2}=\frac{|\alpha|^{2}+|\beta|^{2}}{2}
\end{align}
   for given $Z=z_1 + z_2 j=\alpha \eu + \beta \es $; $z_1, z_2,\alpha,\beta \in \C$. The Euclidean norm in \eqref{euclideannorm} is in fact the one induced from the $\BC$-scaler product in \eqref{spBC}.
   The norm in \eqref{inducednorm} obeys a generalized Schwarz inequality (\cite[Theorem 3.7]{GLMRochon2010AFA})
\begin{align}\label{SchIneq}
\left|\scal{\phi, \varphi}\right| \leq \sqrt{2} \norm{\phi} \norm{\varphi} .
\end{align}
   Accordingly, the concept of infinite bicomplex Hilbert space was defined in \cite{GLMRochon2010AFA}.
   It is a $\BC$-inner product space $(M, \scal{\cdot, \cdot})$ which is complete with respect to the induced $\BC$-norm \eqref{inducednorm}.
   The next result is interesting in itself and its proof is contained in Theorems 3.4, 3.5 and Corollary 3.6 in \cite{GLMRochon2010AFA}.

\begin{thm}\label{thmBCIHS}
The $\BC$-module $(M, \scal{\cdot, \cdot} )$ is an infinite bicomplex Hilbert space if and only if
$(V^\pm, \scal{\cdot, \cdot}_{V^\pm})$ are $\C$-Hilbert spaces.
\end{thm}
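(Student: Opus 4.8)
The plan is to reduce the statement to the $\C$-vector-space decomposition $M\cong V^+\oplus V^-$, exploiting the facts already recorded in the preliminaries: the $\BC$-inner product splits as $\scal{\phi,\varphi}=\scal{\phi^+,\varphi^+}_{V^+}\eu+\scal{\phi^-,\varphi^-}_{V^-}\es$ with $\scal{\cdot,\cdot}_{V^\pm}$ genuine $\C$-inner products on $V^\pm$, and, by \eqref{inducednorm}, the induced $\BC$-norm is $\norm{\phi}^2=\frac12\big(\scal{\phi^+,\phi^+}_{V^+}+\scal{\phi^-,\phi^-}_{V^-}\big)$, i.e. up to the constant $1/\sqrt2$ it is the $\ell^2$-direct sum of the Hilbertian norms of $V^\pm$. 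Since $(M,\scal{\cdot,\cdot})$ is assumed to be a $\BC$-inner product space in both directions, the only thing at stake is completeness with respect to $\norm{\cdot}$.

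First I would record two elementary comparison estimates. Writing $\norm{\psi}_{V^\pm}:=\sqrt{\scal{\psi,\psi}_{V^\pm}}$ for $\psi\in V^\pm$, the formula \eqref{inducednorm} gives $\norm{\phi^+}_{V^+}\le\sqrt2\,\norm{\phi}$ and $\norm{\phi^-}_{V^-}\le\sqrt2\,\norm{\phi}$, and, since $\sqrt{a^2+b^2}\le a+b$ for $a,b\ge 0$, also $\norm{\phi}\le\frac1{\sqrt2}\big(\norm{\phi^+}_{V^+}+\norm{\phi^-}_{V^-}\big)$. Hence the coordinate projections $\phi\mapsto\phi^\pm$ from $(M,\norm{\cdot})$ onto $(V^\pm,\norm{\cdot}_{V^\pm})$ and the reconstruction map $(u,v)\mapsto u+v$ from $V^+\oplus V^-$ to $M$ are all bounded. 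It follows that a sequence $(\phi_n)$ in $M$ is Cauchy (resp. norm-convergent) if and only if each of $(\phi_n^+)$, $(\phi_n^-)$ is Cauchy (resp. convergent) in the corresponding $V^\pm$, and limits are then taken componentwise.

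For the forward implication, assume $(M,\scal{\cdot,\cdot})$ is an infinite bicomplex Hilbert space, i.e. $M$ is $\norm{\cdot}$-complete. Given a Cauchy sequence $(u_n)$ in $(V^+,\norm{\cdot}_{V^+})$, regard each $u_n$ as an element of $M$ with zero $\es$-component; by the estimate above $(u_n)$ is Cauchy in $M$, hence converges to some $\phi\in M$, and projecting onto $V^+$ gives $u_n\to\phi^+$ in $V^+$. Thus $V^+$ is complete, and likewise $V^-$; since $\scal{\cdot,\cdot}_{V^\pm}$ are $\C$-inner products inducing $\norm{\cdot}_{V^\pm}$, the spaces $(V^\pm,\scal{\cdot,\cdot}_{V^\pm})$ are $\C$-Hilbert spaces. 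Conversely, if both $V^\pm$ are complete and $(\phi_n)$ is Cauchy in $M$, then $(\phi_n^+)$ and $(\phi_n^-)$ are Cauchy in $V^+$ and $V^-$, so they converge to some $u\in V^+$, $v\in V^-$; setting $\phi:=u+v\in M$ one has $\norm{\phi_n-\phi}^2=\frac12\big(\norm{\phi_n^+-u}_{V^+}^2+\norm{\phi_n^--v}_{V^-}^2\big)\to 0$, so $M$ is $\norm{\cdot}$-complete and therefore an infinite bicomplex Hilbert space.

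The argument is essentially bookkeeping around the identification $M\cong V^+\oplus V^-$, which is a genuine bijection because $\eu+\es=1$ and $\eu\es=0$. The one point deserving care is the equivalence between convergence in the $\R$-valued $\BC$-norm and componentwise convergence in the two complex Hilbert norms, and this is exactly what the comparison estimates of the second paragraph supply; I expect no real obstacle here, the substance of the statement being the structural splitting of the bicomplex inner product already established in the preliminaries.
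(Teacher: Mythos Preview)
Your argument is correct: the comparison estimates derived from \eqref{inducednorm} show that the $\BC$-norm on $M$ is equivalent to the $\ell^2$-direct-sum norm on $V^+\oplus V^-$, so completeness of $M$ is equivalent to completeness of each factor, and the rest is bookkeeping. Note, however, that the paper does not supply its own proof of this theorem; it simply records the result and refers to Theorems~3.4, 3.5 and Corollary~3.6 of \cite{GLMRochon2010AFA}. Your write-up is therefore a self-contained replacement for that citation rather than a reconstruction of an argument in the paper, and what you have written is exactly the standard proof one finds in that reference.
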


 We conclude this subsection by recalling the bicomplex version of the classical Riesz' representation theorem \cite[Theorem 3.7]{GLMRochon2010AFA}.

\begin{thm}
 For every continuous $\BC$--valued linear functional $f$ on a $\BC$-module $M$, there exists a unique $\phi_0\in M$ such that for every $\phi\in M$ we have
\begin{align} \label{RieszRepThm}
f(\phi) = \scal{ \phi , \phi_0 } .
\end{align}
\end{thm}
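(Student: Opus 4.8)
The plan is to reduce the statement to the classical ($\C$-linear) Riesz representation theorem applied separately on the two component Hilbert spaces $(V^\pm,\scal{\cdot,\cdot}_{V^\pm})$, via the idempotent decomposition (here $M$ is understood to be an infinite bicomplex Hilbert space, so that Theorem~\ref{thmBCIHS} guarantees that $(V^\pm,\scal{\cdot,\cdot}_{V^\pm})$ are genuine $\C$-Hilbert spaces). Write $\phi=\phi^++\phi^-$ with $\phi^\pm=\phi\eu,\ \phi\es$. Since $f$ is $\BC$-linear and $\eu^2=\eu$, $\es^2=\es$, $\eu\es=0$, one gets $f(\phi^+)=f(\phi^+\eu)=f(\phi^+)\eu\in\BC\eu=\C\eu$, and $f(\phi^-)\in\C\es$ in the same way. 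Hence there are maps $g^\pm:V^\pm\longrightarrow\C$, uniquely determined by $f(\phi^+)=g^+(\phi^+)\eu$ and $f(\phi^-)=g^-(\phi^-)\es$, such that
\[
f(\phi)=g^+(\phi^+)\eu+g^-(\phi^-)\es,\qquad \phi\in M.
\]

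Next I would verify that each $g^\pm$ is a continuous $\C$-linear functional on $V^\pm$. The $\C$-linearity is inherited from the $\BC$-linearity of $f$ restricted to scalars $\lambda\in\C\subset\BC$, after observing that $\lambda\phi^+=(\lambda\phi)\eu$ again lies in $V^+$. Boundedness follows from the continuity of $f$ together with the boundedness of the projections $\phi\mapsto\phi^\pm$, using the norm identity \eqref{inducednorm} (which controls $\norm{\phi^\pm}_{V^\pm}$ by $\norm{\phi}$) and the value $|\lambda\eu|=|\lambda|/\sqrt2$ coming from \eqref{euclideannorm}. This is the step needing the most care, since it mixes the $\BC$-module structure, the idempotents, and the $\sqrt2$-type constants in the $\BC$-norm; but once it is settled, the classical Riesz theorem on $(V^\pm,\scal{\cdot,\cdot}_{V^\pm})$ yields unique vectors $\psi^\pm\in V^\pm$ with $g^\pm(\eta)=\scal{\eta,\psi^\pm}_{V^\pm}$ for every $\eta\in V^\pm$.

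Finally I would set $\phi_0:=\psi^++\psi^-\in M$. Then $\phi_0\eu=\psi^+$ and $\phi_0\es=\psi^-$, so the idempotent decomposition of the $\BC$-scalar product recalled above gives, for every $\phi=\phi^++\phi^-\in M$,
\[
\scal{\phi,\phi_0}=\scal{\phi^+,\psi^+}_{V^+}\eu+\scal{\phi^-,\psi^-}_{V^-}\es=g^+(\phi^+)\eu+g^-(\phi^-)\es=f(\phi),
\]
which is \eqref{RieszRepThm}. For uniqueness, if $\phi_0$ and $\phi_0'$ both satisfy \eqref{RieszRepThm}, then $\scal{\phi,\phi_0-\phi_0'}=0$ for all $\phi\in M$; choosing $\phi=\phi_0-\phi_0'$ and invoking the non-degeneracy axiom $\scal{\psi,\psi}=0\iff\psi=0$ forces $\phi_0=\phi_0'$. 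This completes the argument, the only substantive work being the componentwise reduction in the first two paragraphs.
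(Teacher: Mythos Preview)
Your argument is correct. Note, however, that the paper does not actually prove this theorem: it merely \emph{recalls} the bicomplex Riesz representation theorem from the literature, citing \cite[Theorem 3.7]{GLMRochon2010AFA} without reproducing any proof. So there is no in-paper argument to compare against.

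That said, your approach---splitting $f$ along the idempotent decomposition into $\C$-linear functionals $g^\pm$ on the component Hilbert spaces $V^\pm$, invoking the classical Riesz theorem on each, and reassembling---is exactly the standard route and is the one taken in the cited reference. Your observation that the statement implicitly requires $M$ to be a bicomplex \emph{Hilbert} space (not merely a $\BC$-module) so that Theorem~\ref{thmBCIHS} applies is well placed; the paper's phrasing is slightly loose on this point. The continuity step you flag as ``needing the most care'' is indeed routine once one uses \eqref{inducednorm}, which gives $\norm{\phi^\pm}_{V^\pm}\le\sqrt2\,\norm{\phi}$, so $|g^\pm(\phi^\pm)|=\sqrt2\,|f(\phi^\pm)|$ is controlled by $\norm{\phi^\pm}_{V^\pm}$ via the continuity of $f$.
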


 \subsection{Basic example}
  In order to provide an interesting example of infinite bicomplex Hilbert space, we need to fix further notation. Let  $L^{2,\nu}(X)$ be as in the end of the introductory section. We also consider the space $\Hbc$ of all $\BC$--valued Borel measurable functions $f$ on $\BC$ subject to $\norm{f}_{\Hbc} <+\infty$. The bicomplex norm $\norm{f}_{\Hbc}$ is the one induced from the bicomplex inner product
$$ \scal{ f,g}_{\Hbc}:=c_{\Hbc}^\nu \int_{\BC} \scal{ f(Z),g(Z)} e^{-\nu |Z|^{2}}  d\lambda(Z) $$
via \eqref{inducednorm}, where $\scal{\cdot ,\cdot}$, in the integrand, is the standard bicomplex inner product on $\BC$ defined by \eqref{spBC}.
In fact, for every $f=f_1 \eu + f_2\es  ,g= g_1 \eu + j g_2 \es$,  we have 
$$\scal{f,g}_{\Hbc} = \scal{f_1,g_1}_{L^{2,\nu}(\C^2)} \eu + \scal{f_2,g_2}_{L^{2,\nu}(\C^2)} \es,$$
where $f_k$ and $g_k$; $k=1,2$, are seen as $\C$--valued functions on $ \C^2 $ in the variables $(z_1,z_2)$. Thus, we have
\begin{align}\label{normL2}
\norm{f}_{\Hbc}^{2}= \frac{1}{2}  \left( \norm{f_1}_{L^{2,\nu}(\C^2)}^{2} + \norm{f_2}_{L^{2,\nu}(\C^2)}^{2} \right) .
\end{align}
Subsequently, the following decomposition, with respect to the variables $z_1$ and $z_2$,
\begin{align} \label{DecompHilbert1}
\Hbc = L^{2,\nu}(\C^2) \eu  +  L^{2,\nu}(\C^2) \es,
\end{align}
readily follows from \eqref{normL2}.
Another interesting decomposition of $\Hbc$ with respect to the idempotent representation of bicomplex numbers is the following

\begin{prop}\label{DecompHilbert} We have
\begin{align} \label{DecompHilbert2}
\Hbc = L^{2,\frac{\nu}{2}}(\C^2) \eu  +  L^{2,\frac{\nu}{2}}(\C^2) \es .
\end{align}
More precisely, for every $f\in \Hbc$ there exist some $\phi^{\pm} \in L^{2,\frac{\nu}{2}}(\C^2)$ such that
$$f(\alpha \eu+\beta \es)=\phi^+(\alpha,\beta)\eu+\phi^-(\alpha,\beta)\es$$
with 
\begin{align} \label{norm-bc2}
\norm{f}_{\Hbc}^2 &=  \frac{1}{2} \left( \norm{\phi^+}^{2}_{L^{2,\frac{\nu}{2}}(\C^2)} + \norm{\phi^-}^{2}_{L^{2,\frac{\nu}{2}}(\C^2)} \right).
\end{align}
Moreover, the space $\Hbc$ is an infinite bicomplex Hilbert space on $\BC$.
\end{prop}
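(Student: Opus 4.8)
The plan is to transfer everything to the idempotent variables $(\alpha,\beta)=(z_1-iz_2,z_1+iz_2)$ by a $\C$--linear change of variables in $\C^2$, keeping careful track of the Gaussian weight and of the normalization constants. Given $f\in\Hbc$, I write its idempotent components as functions of these new variables, $f(\alpha\eu+\beta\es)=\phi^+(\alpha,\beta)\eu+\phi^-(\alpha,\beta)\es$, where $\phi^\pm$ are the $\C$--valued functions obtained from the components of $f$ by the substitution $\alpha=z_1-iz_2$, $\beta=z_1+iz_2$ (equivalently, $\phi^\pm$ are the functions $f_1,f_2$ of \eqref{normL2} re-expressed in the variables $(\alpha,\beta)$).

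First I would record the two elementary facts underlying the computation. The map $(z_1,z_2)\mapsto(\alpha,\beta)$ is $\C$--linear with complex determinant $2i$, so as a real map of $\R^4\cong\C^2$ its Jacobian equals $|2i|^2=4$, whence $d\lambda(z_1,z_2)=\tfrac14\,d\lambda(\alpha,\beta)$; and by \eqref{euclideannorm},
$$|Z|^2=|z_1|^2+|z_2|^2=\frac{|\alpha|^2+|\beta|^2}{2}=\frac12\norm{(\alpha,\beta)}_{\C^2}^2 .$$
Together with the relation $c^\nu_{\Hbc}=c^\nu_2=4c^{\nu/2}_2$ from the Remark in Section \ref{s1}, these combine into the single identity
$$c^\nu_{\Hbc}\,e^{-\nu|Z|^2}\,d\lambda(Z)=c^{\nu/2}_{\C^2}\,e^{-\frac\nu2\norm{(\alpha,\beta)}_{\C^2}^2}\,d\lambda(\alpha,\beta),$$
which is the crux of the argument and the only place where constants must be balanced exactly.

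Next I would expand the defining inner product of $\Hbc$. By the standard bicomplex scalar product \eqref{spBC}, $\scal{f(Z),g(Z)}=\phi^+\overline{\psi^+}\eu+\phi^-\overline{\psi^-}\es$ for $f,g\in\Hbc$ with components $\phi^\pm,\psi^\pm$, hence
$$\scal{f,g}_{\Hbc}=\Big(c^\nu_{\Hbc}\!\int_{\BC}\phi^+\overline{\psi^+}e^{-\nu|Z|^2}d\lambda(Z)\Big)\eu+\Big(c^\nu_{\Hbc}\!\int_{\BC}\phi^-\overline{\psi^-}e^{-\nu|Z|^2}d\lambda(Z)\Big)\es .$$
Substituting the identity from the previous step turns each bracket into $\scal{\phi^\pm,\psi^\pm}_{L^{2,\frac\nu2}(\C^2)}$, so that $\scal{f,g}_{\Hbc}=\scal{\phi^+,\psi^+}_{L^{2,\frac\nu2}(\C^2)}\eu+\scal{\phi^-,\psi^-}_{L^{2,\frac\nu2}(\C^2)}\es$. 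Taking $g=f$ and applying \eqref{inducednorm} gives exactly \eqref{norm-bc2}. Reading this norm identity in both directions shows that $f\in\Hbc$ if and only if $\phi^\pm\in L^{2,\frac\nu2}(\C^2)$, which is precisely the decomposition \eqref{DecompHilbert2}.

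For the last assertion I would invoke Theorem \ref{thmBCIHS}: it suffices to check that the $\C$--vector spaces $V^+=\Hbc\,\eu$ and $V^-=\Hbc\,\es$, equipped with the projected scalar products $\scal{\cdot,\cdot}_{V^\pm}$, are $\C$--Hilbert spaces. But the computation above exhibits $f\mapsto\phi^\pm$ as a $\C$--linear isometry of $V^\pm$ onto $L^{2,\frac\nu2}(\C^2)$, and the latter is a complete inner product space; hence $V^\pm$ are $\C$--Hilbert spaces and $\Hbc$ is an infinite bicomplex Hilbert space on $\BC$. The main obstacle is nothing conceptual but the bookkeeping in the second and third steps — getting the Jacobian factor $4$ and the normalization $c^\nu_{\Hbc}=4c^{\nu/2}_2$ to cancel — after which the argument reduces to the formal idempotent calculus already in place in Section 2.
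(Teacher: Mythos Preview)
Your argument is correct and is essentially the paper's own proof: change to the idempotent coordinates $(\alpha,\beta)$, use $d\lambda(Z)=\tfrac14\,d\lambda(\alpha,\beta)$ together with $|Z|^2=\tfrac12(|\alpha|^2+|\beta|^2)$ and the normalization relation $c^\nu_{\Hbc}=4c^{\nu/2}_2$ to obtain the idempotent splitting \eqref{DecompScaler2} of the inner product, deduce \eqref{norm-bc2}, and conclude via Theorem~\ref{thmBCIHS}. The only difference is cosmetic---you spell out the Jacobian and constant bookkeeping more explicitly than the paper does.
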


 \begin{proof}
 For any $f= f_{1} + j f_{2}$ and $g= g_{1} + j g_{2} \in \Hbc$, we consider the functions $\phi^{\pm}=f_{1}\mp i f_{2}$ and $\varphi^{\pm}=g_{1}\mp i g_{2}$, seen as $\C$--valued functions on $ \C^2 $ in the variables $(\alpha,\beta)$, so that
$$f(\alpha \eu+\beta \es)=\phi^+(\alpha,\beta)\eu+\phi^-(\alpha,\beta)\es$$ and
$$g(\alpha \eu+\beta \es)=\varphi^+(\alpha,\beta)\eu+\varphi^-(\alpha,\beta)\es.$$
   Thus, we have
\begin{align} \label{DecompScaler2}
\scal{f,g}_{\Hbc} =  \scal{\phi^+,\varphi^+}_{L^{2,\frac{\nu}{2}}(\C^2)} \eu + \scal{\phi^-,\varphi^-}_{L^{2,\frac{\nu}{2}}(\C^2)} \es ,
\end{align}
since the Lebesgue measure on $\BC\equiv \R^4$ reads
$ d\lambda(Z) =   dx_{1}dy_{1}dx_{2}dy_{2} = \frac{1}{4}  d\lambda(\alpha)  d\lambda(\beta).$
  Therefore, \eqref{normL2} reduces further to \eqref{norm-bc2}.
This completes our check of \eqref{DecompHilbert2}.
  Finally, the result that $\Hbc$ is an infinite bicomplex Hilbert space on $\BC$ readily follows making use of Theorem \ref{thmBCIHS} thanks to \eqref{DecompHilbert2} (or also \eqref{DecompHilbert1}).
\end{proof}

 \section{$\BC$-Bargmann space} 
 Recall first that the classical Bargmann space consists of all holomorphic functions on the complex plane subject to norm boundedness with respect to the Gaussian measure with given normalization constant $c^{\gamma}_1$,
   \begin{align}\label{cBFs}
\mathcal{F}^{2,\gamma }(\C) := Hol(\C) \cap L^{2}\left(\C; c^{\gamma}_1 e^{-\gamma  |z|^2}dxdy\right) ;  
\gamma >0 .
\end{align}
It  is well--studied in the literature \cite{Bargmann1961,Bargmann1962,Segal1963} and is a convenient setting for many problems in functional analysis, mathematical physics, and engineering. Basic references in these areas are e.g. \cite{Bargmann1961,Folland1989,Hall2000,Zhu2012} and the references therein.

The bicomplex counterpart of the classical Bargmann space in \eqref{cBFs} is defined to be the subspace of the infinite Hilbert space $\Hbc$ consisting of bicomplex holomorphic functions on $\BC$,
\begin{align*}
\mathcal{F}^{2,\nu}(\BC)=\Hbc \cap \mathcal{BH}ol(\BC).
\end{align*}
The following result shows that $\mathcal{F}^{2,\nu}(\BC)$ has a Hilbertian decomposition
 with respect to the $(\alpha,\beta)$-idempotent variables. 

\begin{thm}\label{MainThm1}
  We have
\begin{align} \label{DecompBFS}
\mathcal{F}^{2,\nu}(\BC)=\mathcal{F}^{2,\frac{\nu}{2}}(\C)\eu +\mathcal{F}^{2,\frac{\nu}{2}}(\C)\es .
\end{align}
 Succinctly, every $f\in \mathcal{F}^{2,\nu}(\BC) $ can be re-expressed as
$$
f(\alpha \eu +\beta \es )= \phi^{+}(\alpha)\eu + \phi^{-}(\beta)\es .
$$
The $\phi^{\pm}$ are $\C$--valued functions belonging to the classical Bargmann space
$\mathcal{F}^{2,\frac{\nu}{2}}(\C)$. 
  Moreover, we have
\begin{align} \label{norm-bc2f}
\norm{f}_{\Hbc}^2 &=  \frac{1}{2}
 \left( \norm{\phi^+}^{2}_{L^{2,\frac{\nu}{2}}(\C)} + \norm{\phi^-}^{2}_{L^{2,\frac{\nu}{2}}(\C)} \right).
\end{align}
\end{thm}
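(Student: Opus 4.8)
The plan is to combine the two decompositions already at our disposal: the Hilbert-space splitting \eqref{DecompHilbert2} of $\Hbc$ given by Proposition \ref{DecompHilbert}, and the function-theoretic splitting \eqref{decomp:holom} of bicomplex holomorphic functions taken from \cite[Theorem 15.5]{Price1991}. Each ingredient handles one of the two defining conditions for $\mathcal{F}^{2,\nu}(\BC)=\Hbc\cap\mathcal{BH}ol(\BC)$: membership in $\Hbc$ is controlled by the $L^{2,\frac\nu2}(\C^2)$-norms of the idempotent components $\phi^\pm$, while $\BC$-holomorphy forces those components to depend holomorphically on a single idempotent variable. The main point to verify is that these two pieces of information are compatible and together pin down exactly the space on the right-hand side of \eqref{DecompBFS}.

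First I would take $f\in\mathcal{F}^{2,\nu}(\BC)$. Since $f\in\mathcal{BH}ol(\BC)$, \eqref{decomp:holom} gives
$$ f(\alpha\eu+\beta\es)=\phi^+(\alpha)\eu+\phi^-(\beta)\es $$
with $\phi^\pm:\C\to\C$ holomorphic; in particular, viewed as functions on $\C^2$ in the variables $(\alpha,\beta)$, the component $\phi^+$ is independent of $\beta$ and $\phi^-$ is independent of $\alpha$. On the other hand $f\in\Hbc$, so by \eqref{norm-bc2} in Proposition \ref{DecompHilbert} these same $\phi^\pm$ lie in $L^{2,\frac\nu2}(\C^2)$ and
$$ \norm{f}_{\Hbc}^2=\tfrac12\left(\norm{\phi^+}^2_{L^{2,\frac\nu2}(\C^2)}+\norm{\phi^-}^2_{L^{2,\frac\nu2}(\C^2)}\right). $$
Now I would compute, using Fubini and $d\lambda(\alpha,\beta)=d\lambda(\alpha)\,d\lambda(\beta)$ together with $|(\alpha,\beta)|^2=\tfrac12(|\alpha|^2+|\beta|^2)$ and the normalization identity $c^{\frac\nu2}_2=(c^{\frac\nu4}_1)^2$ from the Remark, that for a function of the form $\phi^+(\alpha)$,
$$ \norm{\phi^+}^2_{L^{2,\frac\nu2}(\C^2)}=c^{\frac\nu2}_2\int_{\C^2}|\phi^+(\alpha)|^2 e^{-\frac\nu4(|\alpha|^2+|\beta|^2)}\,d\lambda(\alpha)\,d\lambda(\beta)=c^{\frac\nu4}_1\int_{\C}|\phi^+(\alpha)|^2 e^{-\frac\nu4|\alpha|^2}d\lambda(\alpha), $$
that is, $\norm{\phi^+}_{L^{2,\frac\nu2}(\C^2)}=\norm{\phi^+}_{L^{2,\frac\nu4}(\C)}$; and similarly for $\phi^-$. (Here one must be careful that the effective Gaussian weight on $\C$ has parameter $\nu/4$, not $\nu/2$ — I would double-check this against the constants in the Remark and adjust the statement's $L^{2,\frac\nu2}(\C)$ accordingly, or note that the normalizations absorb the discrepancy.) Holomorphy of $\phi^\pm$ on $\C$ plus finiteness of these norms is precisely the statement $\phi^\pm\in\mathcal{F}^{2,\frac\nu2}(\C)$ (with the weight parameter as dictated by \eqref{cBFs}), giving the inclusion $\subseteq$ in \eqref{DecompBFS} and the norm identity \eqref{norm-bc2f}.

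For the reverse inclusion, I would start from arbitrary $\phi^\pm\in\mathcal{F}^{2,\frac\nu2}(\C)$ and set $f(\alpha\eu+\beta\es)=\phi^+(\alpha)\eu+\phi^-(\beta)\es$. The converse direction of \eqref{decomp:holom} — that such an $f$ is $\BC$-holomorphic on $\BC$, which one checks directly from the three Cauchy–Riemann-type equations $\partial f/\partial Z^*=\partial f/\partial Z^\dag=\partial f/\partial\widetilde Z=0$ using $\partial_\alpha,\partial_\beta$ and the chain rule — shows $f\in\mathcal{BH}ol(\BC)$, and the norm computation just performed, run backwards, shows $f\in\Hbc$ with the stated norm. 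Hence $f\in\mathcal{F}^{2,\nu}(\BC)$, completing \eqref{DecompBFS}. The one genuine subtlety, and the step I expect to need the most care, is tracking the Gaussian parameters through the change of variables $Z=\alpha\eu+\beta\es$: the factor $\tfrac14$ in $d\lambda(Z)=\tfrac14 d\lambda(\alpha)d\lambda(\beta)$, the factor $\tfrac12$ in $|Z|^2=\tfrac12(|\alpha|^2+|\beta|^2)$, and the compensating normalization constants $c^\alpha_{d_X}$ must all be reconciled so that the weight appearing on each complex factor is exactly the one used in the definition \eqref{cBFs} of the target classical Bargmann space; everything else is a routine application of Proposition \ref{DecompHilbert} and \cite[Theorem 15.5]{Price1991}.
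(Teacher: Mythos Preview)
Your strategy is exactly the paper's: combine Proposition \ref{DecompHilbert} (the $L^{2,\frac\nu2}(\C^2)$ splitting of $\Hbc$) with \cite[Theorem 15.5]{Price1991} (the holomorphic idempotent splitting), then integrate out the inert variable via Fubini. The reverse inclusion you add is fine and is implicit in the paper's argument.

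The only genuine issue is the bookkeeping step you yourself flag. You write $|(\alpha,\beta)|^2=\tfrac12(|\alpha|^2+|\beta|^2)$ and $c^{\frac\nu2}_2=(c^{\frac\nu4}_1)^2$, and conclude that the effective one-variable weight is $\nu/4$. Both identities are wrong in the paper's conventions. The space $L^{2,\frac\nu2}(\C^2)$ is defined with the \emph{standard} Euclidean norm $\norm{(\alpha,\beta)}_{\C^2}^2=|\alpha|^2+|\beta|^2$, and correspondingly $c^{\frac\nu2}_2=(c^{\frac\nu2}_1)^2$ (see the Remark at the end of Section~\ref{s1}). The factor $\tfrac12$ from the bicomplex modulus \eqref{euclideannorm} and the Jacobian $\tfrac14$ in $d\lambda(Z)=\tfrac14\,d\lambda(\alpha)\,d\lambda(\beta)$ have \emph{already} been absorbed in passing from $c^\nu_{\BC}e^{-\nu|Z|^2}d\lambda(Z)$ to $c^{\frac\nu2}_2 e^{-\frac\nu2(|\alpha|^2+|\beta|^2)}d\lambda(\alpha)d\lambda(\beta)$ in the proof of Proposition~\ref{DecompHilbert}; applying \eqref{euclideannorm} a second time double-counts that $\tfrac12$. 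With the correct conventions the Fubini computation reads, as in the paper,
\[
\norm{\phi^+}^2_{L^{2,\frac\nu2}(\C^2)}
= c^{\frac\nu2}_2 \int_{\C^2}|\phi^+(\alpha)|^2 e^{-\frac\nu2(|\alpha|^2+|\beta|^2)}\,d\lambda(\alpha)\,d\lambda(\beta)
= \norm{\phi^+}^2_{L^{2,\frac\nu2}(\C)},
\]
using $c^{\frac\nu2}_1\int_\C e^{-\frac\nu2|\beta|^2}d\lambda(\beta)=1$. So the target parameter is indeed $\nu/2$, matching the statement, and no adjustment is needed.
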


\begin{proof}
   By \eqref{DecompHilbert2}, every $\BC$--valued function $f$ on $\BC$ is of the form
   $f(Z)= f(\alpha \eu+\beta \es)=\phi^+(\alpha,\beta)\eu+\phi^-(\alpha,\beta)\es$,
   where the $\C$--valued functions $\phi^{\pm}(\alpha,\beta)$ on $\C^2$ belong to $L^{2,\frac\nu2}(\C^2)$ and satisfy
    \eqref{norm-bc2}. Therefore, $\phi^{\mp} \in \mathcal{F}^{2,\frac\nu2}(\C^2)$.
   On the other hand,  Theorem 15.5 in \cite[p.87]{Price1991} shows that the function $\phi^{+}$ (resp. $\phi^{-}$) is a $\C$--valued holomorphic function on $\C^{2}$ that depends only in $\alpha$ (resp. $\beta$).
   Next, by Fubini's theorem and the fact $  c_1^{\frac{\nu}{2}}\int_{\C} e^{-\frac{\nu}{2}|\xi|^{2}}d\lambda(\xi)= 1$, we obtain
\begin{align*}
\norm{\phi^{+}}_{L^{2,\frac\nu2}(\C^2)}^2 &= c_2^{\frac{\nu}{2}} \int_{\C^2} \left| \phi^{+}(\alpha) \right|^{2}  e^{-\frac{\nu}{2}(|\alpha|^{2}+|\beta|^{2})} d\lambda(\alpha)d\lambda(\beta)
\\&= c_2^{\frac{\nu}{2}} \left( \int_{\C} e^{-\frac{\nu}{2}|\beta|^{2}} d\lambda(\beta) \right) \left(\int_{\C} \left| \phi^{+}(\alpha) \right|^{2} e^{-\frac{\nu}{2}|\alpha|^{2}}  d\lambda(\alpha)\right)
\\& =    \norm{\phi^{+}}_{L^{2,\frac\nu2}(\C)}^2.
\end{align*}
Thus, the condition $\norm{ \phi^{+}}_{L^{2,\frac\nu2}(\C^2)}< +\infty$ implies that $\phi^{+} \in \mathcal{F}^{2,\frac{\nu}{2}}(\C)$. Similarly, we have $\norm{\phi^{-}}_{L^{2,\frac\nu2}(\C^2)}^2  =    \norm{\phi^{-}}_{L^{2,\frac\nu2}(\C)}^2$ and therefore $\phi^{-} \in \mathcal{F}^{2,\frac{\nu}{2}}(\C)$. Accordingly, \eqref{norm-bc2} reduces further to \eqref{norm-bc2f}. 
\end{proof}

\begin{rem}
This is a special way to create two models of Bargmann spaces to work with simultaneously. The first one is focused on $e_+$ and the other on $e_-$.
\end{rem}

\begin{rem} \label{remspHbc}
 For bicomplex holomrphic functions $f,g: \BC \longrightarrow \BC$ with $f=\phi^+\eu +\phi^-\es$ and  $g=\varphi^+\eu +\varphi^-\es$, the quantity in \eqref{DecompScaler2} reduces further to
 \begin{align} \label{spHbc}
\scal{f,g}_{\Hbc} = \scal{\phi^+,\varphi^+}_{L^{2,\frac{\nu}{2}}(\C)} \eu + \scal{\phi^-,\varphi^-}_{L^{2,\frac{\nu}{2}}(\C)} \es .
\end{align}
\end{rem}

\begin{cor}
  The space $\mathcal{F}^{2,\nu}(\BC)$ is an infinite $\BC$-Hilbert space.
\end{cor}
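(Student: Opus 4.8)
The plan is to obtain the corollary as an immediate consequence of the Hilbertian decomposition in Theorem \ref{MainThm1} combined with the characterization of infinite bicomplex Hilbert spaces in Theorem \ref{thmBCIHS}. Accordingly, I set $M=\mathcal{F}^{2,\nu}(\BC)$ and introduce the two associated $\C$-vector spaces $V^{+}=M\eu$ and $V^{-}=M\es$, which I will identify with the classical Bargmann space $\mathcal{F}^{2,\frac{\nu}{2}}(\C)$.

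First I would check that $M$ is a $\BC$-submodule of $\Hbc$ carrying a genuine $\BC$-inner product. Indeed, if $f=\phi^{+}\eu+\phi^{-}\es$ and $g=\varphi^{+}\eu+\varphi^{-}\es$ lie in $M$ with $\phi^{\pm},\varphi^{\pm}\in\mathcal{F}^{2,\frac{\nu}{2}}(\C)$ (Theorem \ref{MainThm1}), and $\tau=a\eu+b\es\in\BC$, then $\tau f+g=(a\phi^{+}+\varphi^{+})\eu+(b\phi^{-}+\varphi^{-})\es$, whose idempotent components are again holomorphic on $\C$ and square integrable against $c_{1}^{\nu/2}e^{-\frac{\nu}{2}|\cdot|^{2}}$, hence belong to $\mathcal{F}^{2,\frac{\nu}{2}}(\C)$; thus $\tau f+g\in M$. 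Restricting $\scal{\cdot,\cdot}_{\Hbc}$ to $M$ and using Remark \ref{remspHbc} (formula \eqref{spHbc}), we get $\scal{f,g}_{\Hbc}=\scal{\phi^{+},\varphi^{+}}_{L^{2,\frac{\nu}{2}}(\C)}\eu+\scal{\phi^{-},\varphi^{-}}_{L^{2,\frac{\nu}{2}}(\C)}\es$, so $\scal{\cdot,\cdot}_{\Hbc}$ inherits the defining $\BC$-inner product axioms on $M$, with $\scal{f,f}_{\Hbc}=0$ forcing $\phi^{\pm}=0$, i.e. $f=0$.

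Next I would identify $V^{\pm}$ and invoke Theorem \ref{thmBCIHS}. The assignment $f=\phi^{+}\eu+\phi^{-}\es\mapsto\phi^{+}$ maps $V^{+}=M\eu$ bijectively onto $\mathcal{F}^{2,\frac{\nu}{2}}(\C)$, and under this identification the projected scalar product $\scal{\cdot,\cdot}_{V^{+}}$ is precisely the inner product of $\mathcal{F}^{2,\frac{\nu}{2}}(\C)$; similarly for $V^{-}$. Since the classical Bargmann space $\mathcal{F}^{2,\frac{\nu}{2}}(\C)$ is a $\C$-Hilbert space — being a closed subspace of $L^{2}(\C;c_{1}^{\nu/2}e^{-\frac{\nu}{2}|z|^{2}}d\lambda)$, by the standard fact that $L^{2}$-limits of holomorphic functions are holomorphic (see e.g. \cite{Bargmann1961,Zhu2012}) — both $(V^{+},\scal{\cdot,\cdot}_{V^{+}})$ and $(V^{-},\scal{\cdot,\cdot}_{V^{-}})$ are $\C$-Hilbert spaces. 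Theorem \ref{thmBCIHS} then yields at once that $(M,\scal{\cdot,\cdot}_{\Hbc})$ is an infinite bicomplex Hilbert space.

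The only non-formal ingredient is the completeness of the classical Bargmann space $\mathcal{F}^{2,\frac{\nu}{2}}(\C)$; the rest is bookkeeping with the idempotent decomposition, so I do not expect any real obstacle. As an alternative route one may argue entirely inside $\Hbc$: by the isometry $\norm{\phi}_{L^{2,\frac{\nu}{2}}(\C^{2})}=\norm{\phi}_{L^{2,\frac{\nu}{2}}(\C)}$ for $\phi$ depending only on $\alpha$ (established in the proof of Theorem \ref{MainThm1}), $\mathcal{F}^{2,\nu}(\BC)$ corresponds componentwise to the closed subspace $\mathcal{F}^{2,\frac{\nu}{2}}(\C)\subset\mathcal{F}^{2,\frac{\nu}{2}}(\C^{2})\subset L^{2,\frac{\nu}{2}}(\C^{2})$ under \eqref{DecompHilbert2}; hence it is a closed $\BC$-submodule of the infinite bicomplex Hilbert space $\Hbc$ (Proposition \ref{DecompHilbert}), and is therefore itself one.
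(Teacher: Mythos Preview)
Your proof is correct and follows exactly the paper's approach: the paper's own proof is the single sentence ``This is an immediate consequence of Theorem \ref{MainThm1} and Theorem \ref{thmBCIHS},'' and you have simply unpacked the details of that implication (the $\BC$-module structure, the identification of $V^{\pm}$ with $\mathcal{F}^{2,\frac{\nu}{2}}(\C)$, and its completeness).
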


\begin{proof}
  This is an immediate consequence of Theorem \ref{MainThm1} and Theorem \ref{thmBCIHS}. 
\end{proof}

\begin{cor}
   A function $f$ belongs to $\mathcal{F}^{2,\nu}(\BC)$ if and only if $f$ can be expanded as $f(Z)=\sum\limits_{n=0}^{\infty} A_{n}Z^{n}$ for some bicomplex sequence $(A_n)_n$ satisfying the growth condition
\begin{align}  \label{normfseries}
  \sum_{n=0}^{\infty}  \frac{2^n n!}{\nu^n}   \left|A_{n}\right|^{2} < +\infty.
\end{align}
\end{cor}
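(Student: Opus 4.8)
The plan is to reduce the bicomplex statement to the well-known series characterization of the classical Bargmann space $\mathcal{F}^{2,\frac{\nu}{2}}(\C)$, via the idempotent decomposition already established in Theorem \ref{MainThm1}. Recall that for the scalar Bargmann space $\mathcal{F}^{2,\gamma}(\C)$ the monomials $\{z^n\}_{n\geq 0}$ form an orthogonal basis with $\norm{z^n}^2_{L^{2,\gamma}(\C)} = n!/\gamma^n$; this is a standard fact (Bargmann \cite{Bargmann1961}) which I would recall at the outset, applied with $\gamma = \nu/2$ so that $\norm{\alpha^n}^2_{L^{2,\frac{\nu}{2}}(\C)} = 2^n n!/\nu^n$.

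First I would write any $f \in \mathcal{F}^{2,\nu}(\BC)$ in its idempotent form $f(\alpha\eu + \beta\es) = \phi^+(\alpha)\eu + \phi^-(\beta)\es$ with $\phi^\pm \in \mathcal{F}^{2,\frac{\nu}{2}}(\C)$, as granted by Theorem \ref{MainThm1}. Expanding each scalar component in its Taylor series, $\phi^+(\alpha) = \sum_n a_n \alpha^n$ and $\phi^-(\beta) = \sum_n b_n \beta^n$, I would then reassemble: using $Z^n = \alpha^n \eu + \beta^n \es$ and $\eu\es = 0$, one gets $f(Z) = \sum_n (a_n \eu + b_n \es) Z^n = \sum_n A_n Z^n$ with $A_n := a_n \eu + b_n \es \in \BC$. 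Conversely, given a bicomplex series $\sum_n A_n Z^n$ with $A_n = a_n\eu + b_n\es$, the same algebra shows its idempotent components are exactly $\sum_n a_n \alpha^n$ and $\sum_n b_n \beta^n$, so the question of membership in $\mathcal{F}^{2,\nu}(\BC)$ is equivalent, by Theorem \ref{MainThm1}, to $\sum_n |a_n|^2 \cdot 2^n n!/\nu^n < \infty$ and $\sum_n |b_n|^2 \cdot 2^n n!/\nu^n < \infty$ simultaneously.

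The final step is to match this pair of scalar conditions with the single stated growth condition \eqref{normfseries}. Here I would use the bicomplex modulus identity $|Z|^2 = (|\alpha|^2 + |\beta|^2)/2$ from \eqref{euclideannorm}, which gives $|A_n|^2 = (|a_n|^2 + |b_n|^2)/2$. Hence $\sum_n \frac{2^n n!}{\nu^n}|A_n|^2 = \frac12\big(\sum_n \frac{2^n n!}{\nu^n}|a_n|^2 + \sum_n \frac{2^n n!}{\nu^n}|b_n|^2\big)$, which is finite if and only if both scalar sums are finite; this is also consistent with the norm formula \eqref{norm-bc2f}, giving $\norm{f}^2_{\Hbc} = \frac12\big(\norm{\phi^+}^2 + \norm{\phi^-}^2\big) = \sum_n \frac{2^n n!}{\nu^n}|A_n|^2$ as a bonus. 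The only genuinely delicate point is the interchange of summation and the normwise convergence of the reassembled series—i.e.\ justifying that term-by-term Taylor expansion of $\phi^\pm$ actually converges in $L^{2,\frac{\nu}{2}}(\C)$-norm and that the monomials are a complete orthogonal system—but this is exactly the content of the classical Bargmann space theory that I would invoke rather than reprove.
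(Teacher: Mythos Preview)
Your proof is correct and follows essentially the same route as the paper: decompose $f$ via Theorem \ref{MainThm1} into $\phi^\pm\in\mathcal{F}^{2,\frac{\nu}{2}}(\C)$, expand each in its Taylor series, reassemble using $Z^n=\alpha^n\eu+\beta^n\es$ to obtain $A_n=a_n\eu+b_n\es$, and then use $|A_n|^2=(|a_n|^2+|b_n|^2)/2$ together with \eqref{norm-bc2f} to translate the pair of scalar growth conditions into the single bicomplex one. If anything you are slightly more careful than the paper, which handles only the forward implication explicitly and leaves the converse and the $L^2$-convergence of the series implicit.
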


\begin{proof}
In view of Theorem \ref{MainThm1}, the functions $\phi^{+}$ and $\phi^{-}$ belong to $\mathcal{F}^{2,\frac{\nu}{2}}(\C)$, and therefore they can be expanded in power series as $\phi^{+}(\alpha)=\sum\limits_{n=0}^{\infty} a_{n}^{+}\alpha^{n}$ and $\phi^{-}(\beta)=\sum\limits_{n=0}^{\infty}a_{n}^{-}\beta^{n}$,
where the complex sequences $(a_{n}^{\pm})_n$ are subject to the growth conditions
\begin{align*}
 \norm{\phi^{\pm}}_{L^{2,\frac\nu2}(\C)}^2 =  \sum_{n=0}^{\infty} \left(\frac{2}{\nu}\right)^n  n!\left|a_{n}^{\pm}\right|^{2}<\infty .
 \end{align*}
Subsequently, we get 
 \begin{align*}
  f(Z) &       = \sum_{n=0}^{\infty}(a_{n}^{+} \eu + a_{n}^{-} \es )(\alpha^{n}\eu +\beta^{n}\es )
      =\sum_{n=0}^{\infty} A_{n}Z^{n},
 \end{align*}
where $A_{n}:= a_{n}^{+} \eu + a_{n}^{-}\es \in \BC$. Next, from \eqref{norm-bc2f}, one obtains the growth condition \eqref{normfseries}.
Indeed, we have 
\begin{align*}
\norm{f}_{\Hbc}^2 &=  \frac{1}{2}
 \sum_{n=0}^{\infty} \left(\frac{2}{\nu}\right)^n  n! \left(\left|a_{n}\right|^{2}+\left|b_{n}\right|^{2} \right)
=  \sum_{n=0}^{\infty}  \left(\frac{2}{\nu}\right)^n  n! \left|A_{n}\right|^{2} .
\end{align*}
This completes the proof.
\end{proof}

\section{Two integral representations of $L^2$-bicomplex holomorphic functions}

The first result in this section is a consequence of Theorem \ref{MainThm1}.

\begin{thm}\label{MainThm41} The space $\mathcal{F}^{2,\nu}(\BC)$ is a reproducing kernel infinite $\BC$-Hilbert space whose reproducing kernel is given by
\begin{align}\label{explicirRHbc}
K_{\BC}^{\nu}\left( Z,W\right) =  e^{\frac\nu 2 Z W^{*}}.
\end{align}
Succinctly, for every $f\in \mathcal{F}^{2,\nu}(\BC)$, we have
\begin{align}\label{REpKerbc}
f(Z)  = c_{\Hbc}^\nu \int_{\BC} e^{\frac{\nu}{2} Z W^{*}} f(W)  e^{-\nu|W|^2} d\lambda(W).
\end{align}
\end{thm}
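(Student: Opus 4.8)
The plan is to reduce the bicomplex statement to two copies of the classical reproducing kernel property via the idempotent decomposition established in Theorem \ref{MainThm1}. Recall that the classical Bargmann space $\mathcal{F}^{2,\gamma}(\C)$ is a reproducing kernel Hilbert space with kernel $K^\gamma(z,w) = e^{\gamma z\bar w}$, meaning $\phi(z) = c_1^\gamma \int_\C e^{\gamma z\bar w}\phi(w) e^{-\gamma|w|^2}\,d\lambda(w)$ for every $\phi \in \mathcal{F}^{2,\gamma}(\C)$. We shall apply this with $\gamma = \nu/2$.

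First I would write $f \in \mathcal{F}^{2,\nu}(\BC)$ in its idempotent form $f(\alpha\eu + \beta\es) = \phi^+(\alpha)\eu + \phi^-(\beta)\es$ with $\phi^\pm \in \mathcal{F}^{2,\frac\nu2}(\C)$, which is exactly the content of \eqref{DecompBFS}. Next I would unpack the right-hand side of \eqref{REpKerbc} in idempotent coordinates: writing $W = \alpha'\eu + \beta'\es$, one has $e^{\frac\nu2 Z W^*} = e^{\frac\nu2 \alpha\overline{\alpha'}}\eu + e^{\frac\nu2 \beta\overline{\beta'}}\es$ using $ZW^* = \alpha\overline{\alpha'}\eu + \beta\overline{\beta'}\es$ (combining \eqref{ib} with the expression for $W^*$ and the rule $e^Z = e^\alpha \eu + e^\beta\es$), together with $e^{-\nu|W|^2} = e^{-\frac\nu2(|\alpha'|^2 + |\beta'|^2)}$ from \eqref{euclideannorm}, $c_{\Hbc}^\nu = (c_1^{\nu/2})^2 \cdot \tfrac14 \cdot 4$... more precisely $c_{\Hbc}^\nu d\lambda(W) = c_2^{\nu/2} \cdot \tfrac14 d\lambda(\alpha')d\lambda(\beta')$, and the elementary fact $\eu\es = 0$ so that cross terms vanish and the integrand splits as a sum of an $\eu$-part depending only on $(\alpha,\alpha')$-data and an $\es$-part depending only on $(\beta,\beta')$-data.

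Carrying this out, the $\eu$-component of the right-hand side of \eqref{REpKerbc} becomes
\begin{align*}
c_2^{\frac\nu2} \tfrac14 \int_{\C^2} e^{\frac\nu2 \alpha\overline{\alpha'}} \phi^+(\alpha') e^{-\frac\nu2(|\alpha'|^2+|\beta'|^2)}\,d\lambda(\alpha')\,d\lambda(\beta'),
\end{align*}
and since $c_2^{\nu/2} = (c_1^{\nu/2})^2$ and $c_1^{\nu/2}\int_\C e^{-\frac\nu2|\beta'|^2}\,d\lambda(\beta') = 1$, a Fubini argument (just as in the proof of Theorem \ref{MainThm1}) collapses the $\beta'$-integral and leaves $c_1^{\nu/2}\int_\C e^{\frac\nu2\alpha\overline{\alpha'}}\phi^+(\alpha')e^{-\frac\nu2|\alpha'|^2}\,d\lambda(\alpha')$, which is $\phi^+(\alpha)$ by the classical reproducing property for $\mathcal{F}^{2,\frac\nu2}(\C)$. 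The $\es$-component is handled identically and yields $\phi^-(\beta)$. Summing, the right-hand side equals $\phi^+(\alpha)\eu + \phi^-(\beta)\es = f(Z)$, which is \eqref{REpKerbc}. For the abstract reproducing-kernel assertion, I would invoke Theorem \ref{thmBCIHS} together with \eqref{spHbc}: the evaluation functional $f \mapsto f(Z)$ is $\BC$-linear and bounded (boundedness of point evaluation on each classical factor $\mathcal{F}^{2,\frac\nu2}(\C)$ transfers through the decomposition and the Schwarz inequality \eqref{SchIneq}), so the bicomplex Riesz representation theorem \eqref{RieszRepThm} produces a reproducing element; identifying it via \eqref{spHbc} with the classical Bergman kernels $K^{\nu/2}(z,w) = e^{\frac\nu2 z\bar w}$ on each component gives $K^\nu_\BC(Z,W) = e^{\frac\nu2 Z W^*}$ as claimed.

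The main obstacle is bookkeeping rather than conceptual: one must be careful with the normalization constants ($c_{\Hbc}^\nu$ versus $c_2^{\nu/2}$ versus $(c_1^{\nu/2})^2$, and the factor $\tfrac14$ from $d\lambda(Z) = \tfrac14 d\lambda(\alpha)d\lambda(\beta)$) and with verifying that $ZW^*$ indeed has the clean idempotent form $\alpha\overline{\alpha'}\eu + \beta\overline{\beta'}\es$ so that $e^{\frac\nu2 ZW^*}$ splits without cross terms — but both of these are already recorded in the Preliminaries, so the proof is essentially a transcription of the classical result through the decomposition \eqref{DecompBFS}.
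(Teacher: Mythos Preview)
Your approach is essentially the paper's: both reduce \eqref{REpKerbc} to two copies of the classical reproducing property for $\mathcal{F}^{2,\frac\nu2}(\C)$ via the idempotent decomposition, the paper invoking the pre-packaged identity \eqref{spHbc} while you unfold the integral directly and apply Fubini. One slip in your bookkeeping: since $c_{\Hbc}^\nu = 4\,c_2^{\nu/2} = 4(c_1^{\nu/2})^2$ (see the Remark at the end of the introduction) and $d\lambda(W)=\tfrac14\,d\lambda(\alpha')d\lambda(\beta')$, the correct relation is $c_{\Hbc}^\nu\,d\lambda(W) = c_2^{\nu/2}\,d\lambda(\alpha')d\lambda(\beta')$ without the extra $\tfrac14$; with your constants the $\eu$-component would come out as $\tfrac14\phi^+(\alpha)$ rather than $\phi^+(\alpha)$.
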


\begin{proof}
The fact that $\mathcal{F}^{2,\nu}(\BC)$ is a reproducing kernel infinite $\BC$-Hilbert is an immediate consequence of Theorem \ref{MainThm1} and the $\BC$-Riesz' representation theorem \eqref{RieszRepThm}, since for every $f \in$  $\mathcal{F}^{2,\nu}(\BC)$, we have 
\begin{align*} 
|f(Z)| \leq \sqrt{2} |e^{\frac{\nu}{4}Z Z^*}| \norm{f}_{\mathcal{F}^{2,\nu}(\BC)}.
\end{align*}
This can be handled making use of the generalized $\BC$-Schwarz inequality \eqref{SchIneq} and the expression \eqref{normfseries} giving the norm $\norm{f}_{\mathcal{F}^{2,\nu}(\BC)}$.
The explicit expression \eqref{explicirRHbc} of the reproducing kernel $K_{\BC}^{\nu}\left( Z,W\right)$ is obtained by only proving the reproducing property \eqref{REpKerbc} thanks the uniqueness of the reproducing kernel. To this end, denote the right-hand side of \eqref{REpKerbc} by
$\mathcal{P}^\nu f (Z)$,
$$\mathcal{P}^\nu f (Z) = \scal{f, K_{\BC}^{\nu}\left( \cdot , Z \right)}_{\Hbc},$$
and notice that the restriction of $K_{\BC}^{\nu}$ in \eqref{explicirRHbc} to $\C \times \C=(\C+j\{0\})\times (\C+j\{0\})$ reduces further to the reproducing function $K_{\C}^{\frac{\nu}{2}}$ of the Bargmann space $\mathcal{F}^{2,\frac{\nu}{2}}(\C)$, so that
\begin{align}\label{RepKerc}
 \phi (z) = \scal{ \phi, K_{\C}^{\frac{\nu}{2}}(\cdot,z) }_{L^{2,\frac{\nu}{2}}(\C)} =  c_1^{\frac{\nu}{2}}  \int_{\C} e^{\frac{\nu}{2} z\overline{\xi}} \phi (\xi) e^{-\frac{\nu}{2}|\xi|^{2}}d\lambda(\xi)
\end{align}
 for every $\phi \in \mathcal{F}^{2,\frac{\nu}{2}}(\C)$. Moreover, in virtue of Remark \ref{remspHbc} 
 as well as the facts
$K_{\BC}^{\nu}(Z,W)= (K_{\BC}^{\nu}(W,Z))^*$ and
$$K_{\BC}^{\nu}\left( Z,W\right) = K_{\C}^{\frac{\nu}{2}}(\alpha,\xi) \eu + K_{\C}^{\frac{\nu}{2}}(\beta,\zeta) \es ,$$
with $Z=\alpha \eu + \beta \es$ and $W=\xi \eu + \zeta \es$,
we obtain
\begin{align*}
\mathcal{P}^\nu f (Z)  &=   \scal{f, K_{\BC}^{\nu}\left( \cdot , Z \right)}_{\Hbc}
\\ &
=  \scal{ \phi^+, K_{\C}^{\frac{\nu}{2}}(\cdot,\xi) }_{L^{2,\frac{\nu}{2}}(\C)} \eu + \scal{\phi^-, K_{\C}^{\frac{\nu}{2}}(\cdot,\zeta)}_{L^{2,\frac{\nu}{2}}(\C)} \es
.
\end{align*}
Therefore, the desired result \eqref{REpKerbc} follows making use of the reproducing property \eqref{RepKerc} for $\phi^{\pm}$ belonging to the classical Bargmann space $\mathcal{F}^{2,\frac{\nu}{2}}(\C)$.
\end{proof}

\begin{rem}
The explicit expression of the corresponding reproducing kernel $K_{\BC}^{\nu}$ can also be obtained using the expansion formula
$\displaystyle K_{\BC}^{\nu}(Z,W)= \sum_{n=0}^{\infty} \phi_n(Z)\phi^{*}_{n}(W),$
where $\phi_{n}$ represents any orthonormal basis of $\Hbc$ like the one provided in Lemma \ref{lem:basis} below.
\end{rem}

\begin{lem} \label{lem:basis}
The set of functions
\begin{align} \label{bcbasis} \phi_n(Z) := \left(\frac{\nu^n}{2^n n!  }\right)^{\frac{1}{2}}  Z^{n}
\end{align}
is a Schauder orthonormal basis of the infinite $\BC$-Hilbert space $\mathcal{F}^{2,\nu}(\BC)$.
\end{lem}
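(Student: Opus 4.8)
The plan is to reduce the statement to the corresponding classical fact for the Bargmann space $\FCh$ via the Hilbertian decomposition \eqref{DecompBFS} of Theorem \ref{MainThm1}. Recall that the normalized monomials $e_n(z):=\left(\frac{\nu^n}{2^n n!}\right)^{\frac12}z^n$ form a Schauder orthonormal basis of $\FCh$ (the classical Bargmann result, see \cite{Bargmann1961}). The point of departure is the elementary identity $Z^n=\alpha^n\eu+\beta^n\es$ for $Z=\alpha\eu+\beta\es$, which shows that the function $\phi_n$ of \eqref{bcbasis} splits as
\begin{align*}
\phi_n(\alpha\eu+\beta\es)=e_n(\alpha)\eu+e_n(\beta)\es ;
\end{align*}
in particular each $\phi_n$ is $\BC$--holomorphic and lies in $\mathcal{F}^{2,\nu}(\BC)$ by Theorem \ref{MainThm1}.

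First I would check orthonormality. Feeding $f=\phi_n$, $g=\phi_m$ into the inner--product formula \eqref{spHbc} of Remark \ref{remspHbc} and using that $\{e_n\}$ is orthonormal in $\FCh$ gives
\begin{align*}
\scal{\phi_n,\phi_m}_{\Hbc}=\scal{e_n,e_m}_{L^{2,\frac{\nu}{2}}(\C)}\eu+\scal{e_n,e_m}_{L^{2,\frac{\nu}{2}}(\C)}\es=\delta_{nm}(\eu+\es)=\delta_{nm},
\end{align*}
the last equality because $\eu+\es=1$.

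Next I would show that $\{\phi_n\}$ is a Schauder basis. Given $f\in\mathcal{F}^{2,\nu}(\BC)$, Theorem \ref{MainThm1} furnishes $\phi^\pm\in\FCh$ with $f(\alpha\eu+\beta\es)=\phi^+(\alpha)\eu+\phi^-(\beta)\es$. Expanding $\phi^\pm=\sum_{n\ge0}a_n^\pm e_n$ in $\FCh$ and setting $A_n:=a_n^+\eu+a_n^-\es\in\BC$, I recombine the two idempotent components to obtain $f=\sum_{n\ge0}A_n\phi_n$, the convergence of the partial sums in the $\BC$--norm of $\Hbc$ being controlled by the norm identity \eqref{norm-bc2f}: indeed
\begin{align*}
\norm{f-\sum_{n=0}^{N}A_n\phi_n}_{\Hbc}^2=\frac12\left(\norm{\phi^+-\sum_{n=0}^{N}a_n^+ e_n}_{L^{2,\frac{\nu}{2}}(\C)}^2+\norm{\phi^--\sum_{n=0}^{N}a_n^- e_n}_{L^{2,\frac{\nu}{2}}(\C)}^2\right),
\end{align*}
which tends to $0$ as $N\to\infty$. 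Uniqueness of the coefficients then follows from orthonormality together with the continuity and $\BC$--linearity in the first slot of $\scal{\cdot,\cdot}_{\Hbc}$, which force $A_m=\scal{f,\phi_m}_{\Hbc}$. Alternatively, both the expansion and the coefficient constraint can be read off directly from the power--series characterization \eqref{normfseries}, upon rewriting $Z^n=(2^n n!/\nu^n)^{1/2}\phi_n$.

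There is no genuinely deep obstacle here; the only thing requiring care is the bookkeeping inherent to working in a $\BC$--module, namely checking that norm convergence, orthogonality and uniqueness of coefficients all descend correctly to (and are recovered from) the two complex components $V^\pm$. All of this is already packaged in Theorem \ref{MainThm1}, Theorem \ref{thmBCIHS} and the formulas \eqref{spHbc}--\eqref{norm-bc2f}, so the main work is simply to organize the reduction cleanly rather than to overcome a real difficulty.
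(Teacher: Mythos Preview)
Your proof is correct and follows essentially the same route as the paper: both arguments reduce orthonormality and completeness of the $\phi_n$ to the classical fact for the monomials $e_n$ in $\FCh$ via the idempotent decomposition from Theorem~\ref{MainThm1} and the inner-product identity \eqref{spHbc}. The only cosmetic difference is that the paper establishes completeness by the totality criterion (showing $\scal{f,E_n}_{\Hbc}=0$ for all $n$ forces $f=0$, after expanding $f$ in a power series), whereas you directly build the convergent expansion $f=\sum_n A_n\phi_n$ from the expansions of $\phi^\pm$; these are of course equivalent.
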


\begin{proof}
By means of Theorem \ref{MainThm1} and Remark \ref{remspHbc} combined with the fact $\eu  + \es=1$, it is clear that the monomials $Z^n =\alpha^n \eu  + \beta^n \es $ form an orthogonal basis of
$\mathcal{F}^{2,\nu}(\BC)$, for the monomials $e_n(\alpha)=\alpha^n$ form an orthogonal basis of $\mathcal{F}^{2,\frac{\nu}{2}}(\C)$.
More precisely, we have
$$\scal{ E_n,E_m}_{\Hbc}=   \scal{ e_n,e_m}_{L^{2,\frac{\nu}{2}}(\C)} =
 \frac{2^n n!}{\nu^n} \delta_{n,m}.$$
For the density of the monomials $Z^n$ in $\mathcal{F}^{2,\nu}(\BC)$, let $f\in \mathcal{F}^{2,\nu}(\BC)$ such that $\scal{f,E_n}_{\Hbc} = 0$ for every nonnegative integer $n$.
By expanding $f$ in power series as $ f(Z) =\sum\limits_{n=0}^\infty A_{n}Z^{n},$ we get
\begin{align*}
\scal{f ,E_n}_{\Hbc}  =\scal{\sum_{m=0}^\infty A_{m}E_m, E_n}_{\Hbc}  =   \frac{2^n n!}{\nu^n} A_{n}.
\end{align*}
This implies that $A_{n}=0$ for all $n$ and consequently $f$ is identically zero on $\BC$.
\end{proof}

We provide below another interesting integral representation of the elements of the bicomplex Bargmann space $\mathcal{F}^{2,\nu}(\BC)$ by means of their restriction to $\C+j\{0\}$. Let $ \mathcal{S}^\nu$ be the integral transform
\begin{align}\label{TransfFcHbc} \mathcal{S}^\nu (F)(Z) : = c_1^{\frac\nu2}
 \int_\C F(\xi) e^{-\frac \nu2 Z\overline{\xi} - \frac{\nu}{2} |\xi|^2} d\lambda(\xi).
\end{align}
It is well--defined on the classical Bargmann space $\mathcal{F}^{2,\frac{\nu}{2}}(\C)$ of weight $\frac\nu2$, and connects it to the special subspace $\Fbci$ consisting of $f \in \Fbc$ leaving $\C_i:=\C+j\{0\}$ invariant, i.e., $f(\C+j\{0\})\subset \C+j\{0\}$. The space $\FCh$, of $L^2$ holomorphic functions on the complex plane $\C$ with values in $\BC$, is then a particular subspace of $\FChT$, $\FCh \subset \FChT$. Thus, we prove the following

\begin{thm} \label{MainThm4}
The transform $ \mathcal{S}^\nu $ maps $\FChT$ onto $\Fbc$ and $\FCh$ onto $\Fbci$.
 Moreover, we have $ \mathcal{S}^\nu (e_n)(Z) = Z^n$.
\end{thm}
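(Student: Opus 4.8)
The plan is to reduce everything to the classical Bargmann space $\FCh$ through the idempotent decomposition, the one genuine computation being the action of $\mathcal{S}^\nu$ on the monomials $e_n(\xi)=\xi^n$. For $Z=\alpha\eu+\beta\es$ the kernel splits,
\begin{align*}
e^{-\frac\nu2 Z\overline{\xi}-\frac\nu2|\xi|^2}=e^{-\frac\nu2(\alpha\overline{\xi}+|\xi|^2)}\eu+e^{-\frac\nu2(\beta\overline{\xi}+|\xi|^2)}\es ,
\end{align*}
so, using $\eu^2=\eu$, $\es^2=\es$, $\eu\es=0$, one has $\mathcal{S}^\nu(F)(Z)=s^\nu(F^+)(\alpha)\,\eu+s^\nu(F^-)(\beta)\,\es$ whenever $F$ is put in idempotent form $F=F^+\eu+F^-\es$, where $s^\nu(g)(w):=c_1^{\frac\nu2}\int_\C g(\xi)e^{-\frac\nu2(w\overline{\xi}+|\xi|^2)}\,d\lambda(\xi)$ for $g\in\FCh$, $w\in\C$. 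Now $s^\nu$ is exactly the reproducing integral \eqref{RepKerc} of $\FCh$, hence $s^\nu(g)=g$ for all $g\in\FCh$; in particular $s^\nu(e_n)(w)=w^n$ --- which may also be checked by a direct term-by-term evaluation of the Gaussian integral (expand $e^{-\frac\nu2 w\overline{\xi}}$ in powers of $\overline{\xi}$, integrate in polar coordinates $\xi=re^{i\theta}$ so that only the $k=n$ term survives the angular integral, and evaluate $\int_0^\infty r^{2n+1}e^{-\frac\nu2 r^2}\,dr=\tfrac{2^n n!}{\nu^{n+1}}$ against $c_1^{\frac\nu2}=\tfrac{\nu}{2\pi}$). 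This already gives the ``moreover'' part: $\mathcal{S}^\nu(e_n)(Z)=e_n(\alpha)\eu+e_n(\beta)\es=\alpha^n\eu+\beta^n\es=Z^n$.

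For surjectivity onto $\Fbc$, take $F\in\FChT$ and write $F=F^+\eu+F^-\es$ with $F^\pm\in\FCh$ --- the decomposition $\FChT=\FCh\,\eu\oplus\FCh\,\es$, which is the analogue for the domain $\C$ of Theorem \ref{MainThm1}, the one-variable holomorphy requirement now being automatic. Then $\mathcal{S}^\nu(F)(Z)=F^+(\alpha)\eu+F^-(\beta)\es$, which by Theorem \ref{MainThm1} is precisely the general form of a member of $\Fbc$; conversely, every $f\in\Fbc$ equals $\phi^+(\alpha)\eu+\phi^-(\beta)\es$ with $\phi^\pm\in\FCh$ (Theorem \ref{MainThm1}), so $f=\mathcal{S}^\nu(\phi^+\eu+\phi^-\es)$ and $\phi^+\eu+\phi^-\es\in\FChT$. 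Hence $\mathcal{S}^\nu(\FChT)=\Fbc$; in fact $\mathcal{S}^\nu$ carries the orthogonal basis $\{e_n\}$ of $\FCh$ onto $\{Z^n\}$ with $\norm{e_n}_{\FCh}=\norm{Z^n}_{\Fbc}$ by Lemma \ref{lem:basis}, so it is a surjective isometry up to normalization, though only surjectivity is asked.

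For the invariant subspace, a $\C$--valued $F\in\FCh$ (viewed inside $\FChT$) has $F^+=F^-=F$, so $\mathcal{S}^\nu(F)(Z)=F(\alpha)\eu+F(\beta)\es$; at $Z=z_1\in\C_i$ one has $\alpha=\beta=z_1$, whence $\mathcal{S}^\nu(F)(z_1)=F(z_1)(\eu+\es)=F(z_1)\in\C_i$, i.e.\ $\mathcal{S}^\nu(F)\in\Fbci$. Conversely, if $f=\phi^+(\alpha)\eu+\phi^-(\beta)\es\in\Fbci$, then restricting to $Z=z_1\in\C_i$ forces $\phi^+(z_1)\eu+\phi^-(z_1)\es\in\C_i$ for every $z_1$; since $a\eu+b\es=\tfrac{a+b}2+\tfrac{a-b}2\,ij$ lies in $\C_i=\C+j\{0\}$ exactly when $a=b$, this yields $\phi^+=\phi^-=:F\in\FCh$ and $f=\mathcal{S}^\nu(F)$, so $\mathcal{S}^\nu(\FCh)=\Fbci$. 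The steps I expect to need the most care are the two pieces of bicomplex bookkeeping --- the idempotent decomposition of $\FChT$ with the correct norm identity, and the characterization that $a\eu+b\es\in\C_i$ if and only if $a=b$, which is what pins down the image in the last part; the analytic content, namely that $s^\nu$ acts as the identity on $\FCh$, is just the reproducing property \eqref{RepKerc}.
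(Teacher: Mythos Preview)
Your argument is correct and matches the paper's approach: both use the idempotent decomposition to reduce $\mathcal{S}^\nu$ to the reproducing integral \eqref{RepKerc} of $\FCh$, the paper phrasing surjectivity as $f=\mathcal{S}^\nu(f|_{\C_i})$ for $f\in\Fbc$ while you compute forward from $F\in\FChT$ and give a more explicit treatment of the $\Fbci$ half. One sign to watch: you carry the minus from the displayed definition \eqref{TransfFcHbc} into your $s^\nu$, but \eqref{RepKerc} has $e^{+\frac\nu2 z\overline\xi}$, and with the minus you would obtain $s^\nu(e_n)(w)=(-w)^n$; the paper's own proof uses $e^{+\frac\nu2 Z\overline\xi}$, so the minus in \eqref{TransfFcHbc} is evidently a typo and your identification of $s^\nu$ with \eqref{RepKerc} is the intended one.
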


\begin{proof}
Starting from the idempotent decomposition $f(Z) =\phi^{+}(\alpha)\eu +\phi^{-}(\beta)\es $ in  $\mathcal{F}^{2,\nu}(\BC)$ and using \eqref{RepKerc} for $\phi^{\pm} \in \mathcal{F}^{2,\frac{\nu}{2}}(\C)$, we see that $f$ admits the representation
\begin{align*}
f(Z) &= c_1^{\frac\nu2} 
\left( \eu \int_{\C} e^{\frac{\nu}{2} \alpha \overline{\xi}} \phi^+ (\xi) e^{-\frac{\nu}{2}|\xi|^{2}}d\lambda(\xi)
+ \es \int_{\C} e^{\frac{\nu}{2} \beta \overline{\xi}} \phi^- (\xi) e^{-\frac{\nu}{2}|\xi|^{2}}d\lambda(\xi)\right)
\\ &= c_1^{\frac\nu2} 
\int_{\C}
\left( e^{\frac{\nu}{2} \alpha \overline{\xi}} \eu +  e^{\frac{\nu}{2} \beta \overline{\xi}} \es \right)
\left(  \phi^+ (\xi) \eu  + \phi^- (\xi) \es \right) e^{-\frac{\nu}{2}|\xi|^{2}}d\lambda(\xi)
\\ &=c_1^{\frac\nu2}
\int_{\C+j\{0\}} e^{\frac{\nu}{2} Z \overline{\xi}} f|_{\C+j\{0\}} e^{-\frac{\nu}{2}|\xi|^{2}}d\lambda(\xi)\\
&= \mathcal{S}^\nu (f|_{\C+j\{0\}})(Z).
 \end{align*}
 The function $f|_{\C+j\{0\}}$ is clearly holomorphic on $\C$ but with coefficients in $\BC$.
 This proves that $ \mathcal{S}^\nu: \FChT \longrightarrow \Fbc $ and its restriction $ \mathcal{S}^\nu|_{\FCh }: \FCh \longrightarrow \Fbci $
  are onto. A direct computation shows that
 the action of $ \mathcal{S}^\nu $ on the monomials $e_n(\xi)=\xi^n$ is given by
  \begin{align*}
 \mathcal{S}^\nu (e_n)(Z)
=c_1^{\frac\nu2}  \int_{\C} \xi^n \left( e^{\frac{\nu}{2} \alpha \overline{\xi}} \eu +  e^{\frac{\nu}{2} \beta \overline{\xi}} \es \right)
 e^{-\frac{\nu}{2}|\xi|^{2}}d\lambda(\xi)
= Z^n ,
 \end{align*}
 since $ \int_{\C} \xi^n e^{\gamma \alpha \overline{\xi}}  e^{-\gamma|\xi|^{2}}d\lambda(\xi)
=  \left(\frac{\pi}{\gamma}\right) \alpha^n$. 
\end{proof}

\begin{rem} \label{MainThm4}
We have
 $$ \norm{\mathcal{S}^\nu (e_n)}_{\Hbc}^2 =  \norm{e_n}_{L^{2,\frac{\nu}{2}}(\C)}^2.$$
\end{rem}

  \begin{rem}
The space $\Fbci$ can be identified to the special phase subspace of $\mathcal{F}^{2,\nu}(\C^2)$ defined by
\begin{equation}\label{Image}
\mathcal{A}^{2,\nu}(\C^2) := \left\{f\in \mathcal{F}^{2,\nu}(\C^2); \, \left(\frac{\partial}{\partial z} + i \frac{\partial}{\partial w}\right) f =0 \right\}
\end{equation}
and characterized as the image of $L^{2,\sigma}(\R)$ by the special one-to-one transform $\mathcal{B}^{2,\nu}\circ \mathcal{B}^{1,\nu}$ obtained as the composition operator of the $1d$ and $2d$ Segal--Bargmann transforms \cite[Theorem 2.2]{BDG2018}. It can also be identified to the space of slice (left) regular functions on the quaternions leaving invariant the slice $\C_i\simeq \C$ in the quaternion $\Hq$ \cite[Theorem 3.4]{BDG2018}.
 \end{rem}

\section{The bicomplex Segal--Bargmann transform}

It is a known fact that the classical Bargmann space $\mathcal{F}^{2,\gamma }(\C)$ is unitary isomorphic to the quantum mechanical configuration space $L^{2,\sigma}(\R)$, of all $\C$--valued $e^{-\sigma x^2} dx$-square integrable functions on the real line, by considering the rescaled Segal--Bargmann transform (\cite{Bargmann1961,Segal1963,Folland1989,Zhu2012}) 
\begin{align} \label{defQSBT}
\mathcal{B}^{\sigma,\gamma}_{\C} (\psi)(z)  = c^{\sigma}_0 \int_{\R} e^{-\sigma  \left(x-\sqrt{\frac{\gamma}{2\sigma}}\, z \right)^2 }\psi(x)dx.
\end{align}
  Mathematical theory of Segal--Bargmann transform has interesting applications in many fields of mathematics and physics and is an essential tool in signal processing.  

We propose in the present section a bicomplex analog of $ \mathcal{B}^{\sigma,\gamma}_{\C}$ in \eqref{defQSBT} and study some of its basic properties. In fact, by Theorem \ref{MainThm1}, we can split any $f\in  \mathcal{F}^{2,\nu}(\BC)$ as
$$f(\alpha \eu +\beta \es )= \phi^{+}(\alpha)\eu + \phi^{-}(\beta)\es  $$
for some $\phi^{\pm}\in \mathcal{F}^{2,\frac{\nu}{2}}(\C)$.
Then, by means of \eqref{defQSBT},  there exist some $\varphi^+, \varphi^- \in L^{2,\sigma}(\R)$ such that
\begin{align}
f(Z) & =  \mathcal{B}^{\sigma,\frac{\nu}{2}}_{\C} (\varphi^+)(\alpha) \eu  +  \mathcal{B}^{\sigma,\frac{\nu}{2}}_{\C} (\varphi^-) (\beta)\es \label{decomSBT}
\\& =  \int_{\R}\left(  B^{\sigma,\frac{\nu}{2}}_{\C}(x;\alpha)  \eu  +   B^{\sigma,\frac{\nu}{2}}_{\C}(x;\beta)   \es \right)    \left( \varphi^+(x) \eu  + \varphi^-(x) \es \right) dx,  \nonumber
\\& = \int_{\mathbb{R}}  {B^{\sigma,\nu}_{\BC}}(x;Z) \varphi(x) dx , \label{SBT1}
\end{align}
where we have set $\varphi(x) := \varphi^+(x) \eu  + \varphi^-(x) \es $.
The function $\varphi:\R \longrightarrow \BC$ belongs to the bicomplex Hilbert space $L^{2,\sigma}_{\BC}(\R)$ consisting of all $e^{-\sigma x^2} dx$-square integrable $\BC$--valued functions whose component functions, with respect to the idempotent decomposition, belong to $L^{2,\sigma}(\R)$. The $\BC$-norm in $L^{2,\sigma}_{\BC}(\R)$ is the one associated to the bicomplex inner product
\begin{align}\label{spR1}
\scal{\varphi,\psi}_{L^{2,\sigma}_{\BC}(\R)} : =  c_0^{\sigma} \int_{\R}  \varphi(x) \psi(x)^{*} e^{-\sigma x^2}dx.
\end{align}
  The kernel function ${B^{\sigma,\nu}_{\BC}}(x;Z):= B^{\sigma,\frac{\nu}{2}}_{\C}(x;\alpha)  \eu  +   B^{\sigma,\frac{\nu}{2}}_{\C}(x;\beta)   \es$
  is explicitly given by
\begin{align} \label{KernelSBT1}
{B^{\sigma,\nu}}_{\BC}(x;Z)
= c^{\sigma}_0  e^{-\sigma  \left(x-\sqrt{\frac{\nu}{4\sigma}}\, Z \right)^2 }
\end{align}
and can be seen as the natural extension of $B^{\sigma,\frac{\nu}{2}}(x;\xi) $ to the bicomplex (holomorphic) setting in the second variable, $Z=\alpha  \eu + \beta  \es$.
The corresponding integral transform acts on $L^{2,\sigma}_{\BC}(\R)$ as defined by the left-hand side of \eqref{SBT1}, to wit
\begin{align}\label{BC:SBT}
{\mathcal{B}^{\sigma,\nu}_{\BC}} (\varphi )(Z) := c_0^{\sigma} \int_{\R}  e^{-\sigma \left(x-\sqrt{\frac{\nu}{4\sigma}}\, Z \right)^2 } \varphi(x)  dx,
\end{align}
 provided that the integral exists.

\begin{defn}
We call ${\mathcal{B}^{\sigma,\nu}_{\BC}}$ the bicomplex Segal--Bargmann transform.
\end{defn}

The action of ${\mathcal{B}^{\sigma,\nu}_{\BC}}$ on the rescaled real Hermite polynomials
 \begin{align}\label{HermitPol}
  H_n^{\sigma}(x) := (-1)^n e^{\sigma x^2} \frac{d^n}{dx^n}\left(e^{-\sigma x^2}\right)
   \end{align}
is given by
 \begin{align}\label{SBT0Hn}
 {\mathcal{B}^{\sigma,\nu}_{\BC}} (H_n^{\sigma})(\xi)
 = \frac{\norm{H_n^{\sigma}}_{L^{2,\sigma}(\R)}}{\norm{e_n}_{L^{2,\frac \nu 2}(\C)}} Z^n
  = \frac{\norm{H_n^{\sigma}}_{L^{2,\sigma}_{\BC}(\R)}}{\norm{E_n^{\nu}}_{L^{2,\nu}(\BC)}} Z^n.
 \end{align}
 It follows by means of \eqref{decomSBT} combined with the facts that 
 $$  \mathcal{B}^{\sigma,\frac{\nu}{2}}_{\C} (H_n^{\sigma})(\xi)  = \frac{\norm{H_n^{\sigma}}_{L^{2,\sigma}(\R)}}{\norm{e_n}_{L^{2,\frac \nu 2}(\C)}}  \xi^n \quad \mbox{ and } \norm{E_n}_{\Hbc}=\norm{e_n}_{L^{2,\frac{\nu}{2}}(\C)} .$$ Notice that we also have made use of
$\norm{H_n^\sigma}_{L^{2,\sigma}_{\BC}(\R)}=\norm{H_n^\sigma}_{L^{2,\sigma}(\R)}$ for $H_n^\sigma$ being real--valued.
Therefore, the considered transform ${\mathcal{B}^{\sigma,\nu}_{\BC}}$ maps the orthonormal basis
\begin{equation}\label{Orthonbasis}
\psi_n^{\sigma}(x) =    \left( \frac{1}{2^n \sigma^n n!} \right)^{1/2} H_n^{\sigma}(x),
\end{equation}
of the configuration space $L^{2,\sigma}_{\BC}(\R)$, to the monomials $ \phi_n(Z)$ in \eqref{bcbasis}, which form an orthonormal basis of $\Fbc$.
Using similar arguments, we can prove that
\begin{align}
\norm{{\mathcal{B}^{\sigma,\nu}_{\BC}} \varphi}_{\Hbc}^2 =  \norm{\varphi}^{2}_{L^{2,\sigma}_{\BC}(\R)}
\end{align}
for every $\varphi \in L^{2,\sigma}_{\BC}(\R)$ such that $f= \BargBC \varphi$.
Indeed,
the result readily follows by means of \eqref{norm-bc2f} and using $ \norm{\mathcal{B}^{\sigma,\gamma}_{\C} (\varphi)}_{L^{2,\gamma}(\C)}^2 = \norm{\varphi}_{L^{2,\gamma}(\R)}^2$.

The above discussion can be reformulated as follows

 \begin{thm} \label{MainThm2}
 The bicomplex Segal--Bargmann transform ${\mathcal{B}^{\sigma,\nu}_{\BC}}$ in \eqref{BC:SBT} realizes a unitary integral transform mapping isometrically the bicomplex Hilbert space $L^{2,\sigma}_{\BC}(\R)$ onto the bicomplex Bargmann space $\Fbc$.
 \end{thm}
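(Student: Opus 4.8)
The plan is to reduce the bicomplex statement to the already-established unitarity of the classical (rescaled) Segal--Bargmann transform $\mathcal{B}^{\sigma,\frac\nu2}_{\C}\colon L^{2,\sigma}(\R)\to\mathcal{F}^{2,\frac\nu2}(\C)$ via the idempotent decomposition, exactly in the spirit of Theorem~\ref{MainThm1}. First I would record that, by the definition of $L^{2,\sigma}_{\BC}(\R)$, every $\varphi$ in this space writes uniquely as $\varphi=\varphi^+\eu+\varphi^-\es$ with $\varphi^\pm\in L^{2,\sigma}(\R)$, and that, by the computation preceding \eqref{SBT1}, the kernel factorizes as ${B^{\sigma,\nu}_{\BC}}(x;Z)=B^{\sigma,\frac\nu2}_{\C}(x;\alpha)\eu+B^{\sigma,\frac\nu2}_{\C}(x;\beta)\es$ when $Z=\alpha\eu+\beta\es$. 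Multiplying out and using $\eu\es=0$, $\eu^2=\eu$, $\es^2=\es$, the integral in \eqref{BC:SBT} splits as
\begin{align*}
{\mathcal{B}^{\sigma,\nu}_{\BC}}(\varphi)(Z)=\mathcal{B}^{\sigma,\frac\nu2}_{\C}(\varphi^+)(\alpha)\,\eu+\mathcal{B}^{\sigma,\frac\nu2}_{\C}(\varphi^-)(\beta)\,\es,
\end{align*}
so ${\mathcal{B}^{\sigma,\nu}_{\BC}}$ acts componentwise with respect to the idempotent splitting.

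Next I would prove surjectivity onto $\Fbc$. Given $f\in\Fbc$, Theorem~\ref{MainThm1} yields $f(\alpha\eu+\beta\es)=\phi^+(\alpha)\eu+\phi^-(\beta)\es$ with $\phi^\pm\in\mathcal{F}^{2,\frac\nu2}(\C)$; since the classical transform is onto, there are $\varphi^\pm\in L^{2,\sigma}(\R)$ with $\mathcal{B}^{\sigma,\frac\nu2}_{\C}(\varphi^\pm)=\phi^\pm$, and then $\varphi:=\varphi^+\eu+\varphi^-\es\in L^{2,\sigma}_{\BC}(\R)$ satisfies ${\mathcal{B}^{\sigma,\nu}_{\BC}}(\varphi)=f$ by the displayed factorization. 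Injectivity follows the same way: if ${\mathcal{B}^{\sigma,\nu}_{\BC}}(\varphi)=0$ then each component $\mathcal{B}^{\sigma,\frac\nu2}_{\C}(\varphi^\pm)=0$, hence $\varphi^\pm=0$ by injectivity of the classical transform, hence $\varphi=0$. Alternatively, and perhaps more cleanly, one may invoke the already-observed fact that ${\mathcal{B}^{\sigma,\nu}_{\BC}}$ sends the orthonormal basis $\psi_n^\sigma$ of \eqref{Orthonbasis} to the orthonormal basis $\phi_n$ of \eqref{bcbasis} (equation \eqref{SBT0Hn} together with Lemma~\ref{lem:basis}), which immediately gives a bijection between the two bases.

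For the isometry I would combine \eqref{norm-bc2f} with the classical Plancherel identity $\norm{\mathcal{B}^{\sigma,\gamma}_{\C}(\psi)}_{L^{2,\gamma}(\C)}=\norm{\psi}_{L^{2,\gamma}(\R)}$ applied to $\psi=\varphi^\pm$ and $\gamma=\frac\nu2$, obtaining
\begin{align*}
\norm{{\mathcal{B}^{\sigma,\nu}_{\BC}}\varphi}_{\Hbc}^2=\tfrac12\big(\norm{\phi^+}^2_{L^{2,\frac\nu2}(\C)}+\norm{\phi^-}^2_{L^{2,\frac\nu2}(\C)}\big)=\tfrac12\big(\norm{\varphi^+}^2_{L^{2,\sigma}(\R)}+\norm{\varphi^-}^2_{L^{2,\sigma}(\R)}\big)=\norm{\varphi}^2_{L^{2,\sigma}_{\BC}(\R)},
\end{align*}
the last equality being the definition of the $\BC$-norm on $L^{2,\sigma}_{\BC}(\R)$ induced via \eqref{inducednorm} from \eqref{spR1}. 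To upgrade norm-preservation to preservation of the $\BC$-inner product (the precise meaning of ``unitary'' for $\BC$-modules), I would note that by the componentwise action and \eqref{spHbc} one has
\begin{align*}
\scal{{\mathcal{B}^{\sigma,\nu}_{\BC}}\varphi,{\mathcal{B}^{\sigma,\nu}_{\BC}}\psi}_{\Hbc}=\scal{\mathcal{B}^{\sigma,\frac\nu2}_{\C}\varphi^+,\mathcal{B}^{\sigma,\frac\nu2}_{\C}\psi^+}_{L^{2,\frac\nu2}(\C)}\eu+\scal{\mathcal{B}^{\sigma,\frac\nu2}_{\C}\varphi^-,\mathcal{B}^{\sigma,\frac\nu2}_{\C}\psi^-}_{L^{2,\frac\nu2}(\C)}\es,
\end{align*}
and then apply the classical unitarity componentwise to recover $\scal{\varphi,\psi}_{L^{2,\sigma}_{\BC}(\R)}$ from \eqref{spR1}. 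I do not expect a serious obstacle here: the whole argument is bookkeeping through the idempotent decomposition, and the only point requiring a little care is making sure every identity is stated at the correct weight (the classical pieces carry weight $\frac\nu2$, matching \eqref{decomSBT}) and that the $\sqrt2$ factors from \eqref{inducednorm}--\eqref{euclideannorm} are handled consistently on both sides, which they are since the same normalization appears in $\norm{\cdot}_{\Hbc}$ and in $\norm{\cdot}_{L^{2,\sigma}_{\BC}(\R)}$.
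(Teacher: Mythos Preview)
Your argument is correct and follows essentially the same route as the paper: the discussion preceding Theorem~\ref{MainThm2} establishes the componentwise factorization \eqref{decomSBT}, the basis-to-basis map \eqref{SBT0Hn}, and the isometry via \eqref{norm-bc2f} together with the classical Plancherel identity, exactly as you do. Your write-up is if anything a bit more complete, making the surjectivity/injectivity and the $\BC$-inner-product preservation explicit rather than leaving them implicit in the phrase ``the above discussion can be reformulated as follows.''
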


 \begin{rem}
A variant of the bicomplex Segal--Bargmann transform can be defined on $\Ldc$, the bicomplex Hilbert space of $\BC$-valued functions on the hyperbolic (bireal) numbers 
$$\D=\{U=x \eu + y\es; x,y\in\R \}$$ endowed with
the bicomplex inner product
\begin{align}\label{spD}
\scal{\varphi,\psi}_{L^{2,\sigma}_{\BC}(\D)} : =  c_1^{\sigma} \int_{\D}  \varphi(U) \psi(U)^{*} e^{-\sigma \norm{U}^2}d\lambda(U),
\end{align}
where $d\lambda(U)=dxdy$ is the Lebesgue measure on $\D$. In fact, we can consider
\begin{align}\label{BC:VSBT}
\widetilde{\BargBC} (\varphi )(Z) := c_1^{\frac\nu2} 
 \int_{\D}    e^{-\sigma  \left(U-\frac{\nu}{4\sigma}Z\right)^2} \varphi(U)  e^{-\sigma{U^{\dag}}^2 } d\lambda(U).
\end{align}
 For every fixed $Z$, the restriction of the involved kernel function in the right--hand side of \eqref{BC:SBT} to the diagonal of the bireal numbers reduces further to the kernel function in the bicomplex Segal--Bargmann transform $\BargBC$.
\end{rem}

\begin{rem}
It should also be noted that the kernel function ${B^{\sigma,\nu}_{\BC}}(x;Z)$ in \eqref{KernelSBT1}
 is closely connected to the generating function $G^{\sigma,\nu}(x;Z)$ of the Hermite polynomials $H_n^{\sigma}(x)$, 
 $$
B^{\sigma,\nu}_{\BC} (x;Z)  =  c_0^{\sigma} e^{-\sigma x^2} G^{\sigma,\nu}(x;Z^*) .
$$ 
 Indeed, for every given $(x;Z)\in \R \times \BC$, we have
\begin{equation}\label{GenFct}
G^{\sigma,\nu}(x;Z):=\sum_{n=0}^\infty\frac{H_n^{\sigma}(x) E_n(Z^*)}{\norm{H_n^{\sigma}}_{L^{2,\sigma}_{\BC}(\R)} \norm{E_n}_{\Hbc} }
= e^{-\frac{\nu}{4} (Z^*)^2  + \sqrt{\sigma \nu} x Z^*} .
\end{equation}
 \end{rem}

 \begin{rem}
 By means of \eqref{GenFct}, one can show that for every $Z,W\in \BC$, we have
 \begin{align*}\scal{ G^{\sigma,\nu}(\cdot;Z^*), G^{\sigma,\nu}(\cdot;W) }_{L^{2,\sigma}_{\BC}(\R)}
& =  c_0^{\sigma} \int_{\R} G^{\sigma,\nu}(x;Z^*) G^{\sigma,\nu}(x;W^*) e^{-\sigma x^2}dx
\\&  =  K^{\nu}_{\BC}(Z,W).
\end{align*}
Therefore, the function $G^{\sigma,\nu}_Z:x\mapsto{G^{\sigma,\nu}_Z(x):=G^{\sigma,\nu}(x;Z)}$, for every fixed $Z\in \BC$,
 belongs to $L^{2,\sigma}_{\BC}(\R)$. 
\end{rem}

The previous discussion shows that the transform $\BargBC$ admits an inverse mapping $\Fbc$ onto $L^{2,\sigma}_{\BC}(\R)$.
Basically, using the fact that
\begin{equation}\label{basisbasis}
\BargBC (\psi_n^{\sigma})(Z) = \phi_n(Z),
\end{equation}
where $\psi_n^{\sigma}$ (resp. $ \phi_n(Z)$) is the orthonormal basis of $L^{2,\sigma}_{\BC}(\R)$ (resp. $\Fbc$) given by \eqref{Orthonbasis} (resp. \eqref{bcbasis}), we see that the inverse transform $[\BargBC]^{-1}$ admits the expansion
\begin{align*}
[\BargBC]^{-1}(f)(x) &:= \sum_{n=0}^{\infty} c_n \norm{E_n}_{\Hbc} [\BargBC]^{-1}(\phi_n)(x)
\\&= \sum_{n=0}^{\infty} \frac{ \norm{E_n}_{\Hbc} }{ \norm{H_n^\sigma}_{L^{2,\sigma}_{\BC}(\R)}  } c_n H_n^\sigma(x)
\end{align*}
for $f := \sum\limits_{n=0}^{\infty}c_n  E_n   \in \Fbc$. 
Now, since
$\scal{E_n,E_n}_{\Hbc}= \norm{E_n}_{\Hbc}^2 \notin \mathcal{NC}$, we get $$c_n = \scal{f,E_n}_{\Hbc}(\scal{E_n,E_n}_{\Hbc})^{-1} = \frac{\scal{f,E_n}_{\Hbc}}{\norm{E_n}_{\Hbc}^2 },$$
 so that
\begin{align} \label{inverseExp}
[\BargBC]^{-1}(f)(x) &= \sum_{n=0}^{\infty}\frac{\scal{f,E_n}_{\Hbc}}{\norm{H_n^\sigma}_{L^{2,\sigma}_{\BC}(\R)} \norm{E_n}_{\Hbc} }  H_n^\sigma(x).
\end{align}
This expansion leads to the integral representation
\begin{align} \label{inverseintegral}
[\BargBC]^{-1}(f)(x) &= \scal{f, (G^{\sigma,\nu}(x;\cdot))^* }_{\Hbc}
\end{align}
that maps isometrically $\Fbc$ onto $L^{2,\sigma}_{\BC}(\R)$.
  Thus, one can assert the following result.

 \begin{thm}\label{MainThm2inverse}
 The integral representation of the unitary transform  $[\BargBC]^{-1}: \Fbc \longrightarrow L^{2,\sigma}_{\BC}(\R)$ is given by
 \begin{align}
   [\BargBC]^{-1}(f)(x) =  c_T^{\nu} \int_{\BC} e^{-\nu |Z|^2 - \frac{\nu}{4} (Z^*)^2  + \sqrt{\sigma \nu} x Z^*}  f(Z)   d\lambda(Z).\label{inverseExpIntRep}
\end{align}
 \end{thm}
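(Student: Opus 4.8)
The plan is to start from the spectral expansion \eqref{inverseExp} of $[\BargBC]^{-1}$ and rewrite it as an integral against the bicomplex Hilbert space $\Hbc$. First I would recall from \eqref{inverseintegral} that
$$[\BargBC]^{-1}(f)(x) = \scal{f, (G^{\sigma,\nu}(x;\cdot))^* }_{\Hbc},$$
which is legitimate because the previous remark shows that $G^{\sigma,\nu}_Z \in L^{2,\sigma}_{\BC}(\R)$ for every fixed $Z$, hence $x\mapsto (G^{\sigma,\nu}(x;\cdot))^*$ is, for each fixed $x$, an element of $\Hbc$ (it is bicomplex holomorphic in $Z$, being a composition of $Z\mapsto Z^*$ with an entire function, and square integrable against the Gaussian by the computation $\scal{G^{\sigma,\nu}(\cdot;Z^*),G^{\sigma,\nu}(\cdot;W)}=K^\nu_{\BC}(Z,W)$ evaluated at $W=Z$). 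Then I would expand the bicomplex inner product via its integral definition
$$\scal{f,g}_{\Hbc} = c_{\Hbc}^\nu \int_{\BC} \scal{f(Z),g(Z)} e^{-\nu|Z|^2} d\lambda(Z)$$
with $g(Z) = (G^{\sigma,\nu}(x;Z))^*$, and use $\scal{A,B}=AB^*$ together with $(g(Z))^* = G^{\sigma,\nu}(x;Z) = e^{-\frac\nu4 Z^2 + \sqrt{\sigma\nu}\,xZ}$ from \eqref{GenFct}. Wait — one must be careful: in \eqref{GenFct} the generating function is $G^{\sigma,\nu}(x;Z)=e^{-\frac\nu4(Z^*)^2+\sqrt{\sigma\nu}xZ^*}$ only after the substitution displayed there, so I would track the conjugations carefully and identify $(G^{\sigma,\nu}(x;\cdot))^*$ evaluated at $Z$ as the kernel $e^{-\frac\nu4(Z^*)^2+\sqrt{\sigma\nu}xZ^*}$, matching exactly the exponential appearing in \eqref{inverseExpIntRep}.

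The core computation then is to substitute $\scal{f(Z), (G^{\sigma,\nu}(x;Z))^*} = f(Z)\cdot\big((G^{\sigma,\nu}(x;Z))^*\big)^* = f(Z)\, e^{-\frac\nu4 (Z^*)^2 + \sqrt{\sigma\nu}\, x Z^*}$ into the integral, which yields
$$[\BargBC]^{-1}(f)(x) = c_{\Hbc}^\nu \int_{\BC} e^{-\frac\nu4(Z^*)^2 + \sqrt{\sigma\nu}\,xZ^*}\, f(Z)\, e^{-\nu|Z|^2} d\lambda(Z),$$
so that setting $c_T^\nu := c_{\Hbc}^\nu = (\nu/\pi)^2$ gives precisely \eqref{inverseExpIntRep}. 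To make this rigorous rather than formal, I would justify interchanging the sum in \eqref{inverseExp} with the integral: expand $f=\sum_n c_n E_n$, use the orthonormality relations $\scal{E_n,E_m}_{\Hbc}=\frac{2^n n!}{\nu^n}\delta_{n,m}$ from Lemma \ref{lem:basis}, and verify that $\scal{f,E_n}_{\Hbc}$ reproduces the coefficient against the $n$-th term of the expansion of the kernel $G^{\sigma,\nu}(x;\cdot)$ in the basis $\{E_n\}$; this is exactly the content of \eqref{GenFct} read as $G^{\sigma,\nu}(x;Z)=\sum_n \frac{H_n^\sigma(x)E_n(Z^*)}{\norm{H_n^\sigma}_{L^{2,\sigma}_{\BC}(\R)}\norm{E_n}_{\Hbc}}$. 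The convergence of the kernel series in $\Hbc$ (uniformly for $x$ in compacta) then legitimizes passing from the spectral sum to the single integral.

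The main obstacle I anticipate is bookkeeping of the three bicomplex conjugations ($Z^\dag$, $\widetilde Z$, $Z^*$) and the fact that $Z\mapsto Z^*$ is not bicomplex holomorphic while $Z\mapsto (Z^*)^2$ composed appropriately inside $G^{\sigma,\nu}$ still produces an element of $\Hbc$ — one must check holomorphicity of $Z\mapsto (G^{\sigma,\nu}(x;Z))^*$ in $Z$ (as opposed to $Z\mapsto G^{\sigma,\nu}(x;Z)$). Concretely, $(G^{\sigma,\nu}(x;\cdot))^*$ at $Z$ equals $e^{-\frac\nu4(Z^*)^2+\sqrt{\sigma\nu}xZ^*}$ applied with an extra star, i.e. $e^{-\frac\nu4 Z^2 + \sqrt{\sigma\nu}xZ}$, which IS entire in $Z$; so the inner product $\scal{f,(G^{\sigma,\nu}(x;\cdot))^*}_{\Hbc}$ pairs two bicomplex holomorphic functions and the $*$ in the second slot of $\scal{\cdot,\cdot}$ restores the kernel $e^{-\frac\nu4(Z^*)^2+\sqrt{\sigma\nu}xZ^*}$ seen in \eqref{inverseExpIntRep}. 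Once these conjugation identities are pinned down using the idempotent representation (where everything decouples into two ordinary complex Bargmann computations via \eqref{DecompBFS} and \eqref{spHbc}), the rest is the standard Gaussian-integral verification already carried out componentwise in the classical case, and the normalization constant is fixed by the remark in the introduction giving $c^\nu_{\BC}=(\nu/\pi)^2$.
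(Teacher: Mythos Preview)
Your proposal is correct and follows essentially the same route as the paper: the paper's argument is precisely the discussion preceding the theorem statement, which passes from the spectral expansion \eqref{inverseExp} to the inner-product form \eqref{inverseintegral} and then reads off the explicit kernel from the generating-function identity \eqref{GenFct}. Your write-up is in fact more careful than the paper's in tracking the conjugations and in justifying the interchange of sum and integral, but the underlying strategy is identical.
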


In the next section, we will make use of the bicomplex Bargmann transform and its inverse to define a class of Fourier transforms generalizing the standard $i$-- and $j$--Fourier transforms. 

\section{A class of  bicomplex fractional  Fourier transforms}

The standard bicomplex Fourier transform is defined as
$$\mathcal{F} (\psi)(Z) = \frac{1}{\sqrt{2\pi}} \int_{\R} e^{-i x Z} \psi (x) dx , \quad Z=\alpha\eu+\beta \es. $$
Using the idempotent decomposition, the transform $\mathcal{F}_{\BC}$ can be seen as duplication over the two Fourier transforms with respect to complex frequencies $\alpha,\beta$ of given $\psi$ on the real line (see for example  \cite{DeBieStruppaVajiacVajiac2012,BanerjeeDattaHoque2014,BanerjeeDattaHoque2015}).

Now, for every fixed $\theta \in \{\theta\in S^1\eu+S^1\es;  \, \theta\ne\pm 1,\pm ij\}$, we define the integral transform $\mathcal{F}^\sigma_\theta $ to be
\begin{align}  \label{FrFourierT}
\mathcal{F}^\sigma_\theta \psi (y) 
= 
\frac{c_0^{\sigma}}{\sqrt{1 - \theta^2}} 
\int_{\R} \psi (x)  e^{- \frac{\sigma}{1 - \theta^2} (x-\theta y)^2  } dx
\end{align}
for given bicomplex--valued function on the real line. 
It will be called bicomplex fractional Fourier transform (BFrFT). The following result shows in particular that the kernel function of $\mathcal{F}^\sigma_\theta$ is closely connected to the integral transform $[\BargBC]^{-1} ( \Gamma_\theta \BargBC \psi )(x)$, where  $\Gamma_\theta f (Z):= f(\theta Z)$, and therefore to the bicomplex version of Mehler formula for Hermite polynomials.

\begin{thm}  \label{FourierBargmann}
The transform $\mathcal{F}^\sigma_\theta $ is well--defined on the bicomplex Hilbert space $L^{2,\sigma}_{\BC}(\R)$. Moreover, for every $\psi\in L^{2,\sigma}_{\BC}(\R)$ and $x\in \R$, we have
\begin{align}\label{tFourier}
\mathcal{F}^\sigma_\theta \psi (x) = [\BargBC]^{-1} ( \Gamma_\theta \BargBC \psi )(x) .
\end{align}
\end{thm}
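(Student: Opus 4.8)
The plan is to exploit the idempotent decomposition throughout and reduce the bicomplex identity \eqref{tFourier} to the corresponding statement for the two complex-analytic components, each of which is the familiar fact that the classical fractional Fourier transform is conjugate, via the Segal--Bargmann transform, to the dilation operator on the Bargmann space. First I would check that $\mathcal{F}^\sigma_\theta$ is well--defined on $L^{2,\sigma}_{\BC}(\R)$: writing $\theta=\theta^+\eu+\theta^-\es$ with $\theta^\pm\in S^1$ and $\psi=\psi^+\eu+\psi^-\es$ with $\psi^\pm\in L^{2,\sigma}(\R)$, the identity $\eu\es=0$ together with $Z^n=\alpha^n\eu+\beta^n\es$ gives
$$
\mathcal{F}^\sigma_\theta\psi(y)=\left(\frac{c_0^\sigma}{\sqrt{1-(\theta^+)^2}}\int_\R\psi^+(x)e^{-\frac{\sigma}{1-(\theta^+)^2}(x-\theta^+y)^2}dx\right)\eu+\left(\frac{c_0^\sigma}{\sqrt{1-(\theta^-)^2}}\int_\R\psi^-(x)e^{-\frac{\sigma}{1-(\theta^-)^2}(x-\theta^-y)^2}dx\right)\es,
$$
so $\mathcal{F}^\sigma_\theta\psi=\mathcal{F}^\sigma_{\theta^+}\psi^+\,\eu+\mathcal{F}^\sigma_{\theta^-}\psi^-\,\es$, where $\mathcal{F}^\sigma_{\theta^\pm}$ is the classical fractional Fourier transform with parameter $\theta^\pm$; the constraints $\theta\ne\pm1,\pm ij$ translate exactly into $\theta^\pm\ne\pm1$, so $1-(\theta^\pm)^2\ne0$ and each component transform is the well--studied bounded operator on $L^{2,\sigma}(\R)$. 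Hence $\mathcal{F}^\sigma_\theta$ is well--defined on $L^{2,\sigma}_{\BC}(\R)$.

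Next I would decompose the right-hand side of \eqref{tFourier}. By \eqref{decomSBT}, $\BargBC\psi=\mathcal{B}^{\sigma,\frac\nu2}_\C(\psi^+)(\alpha)\,\eu+\mathcal{B}^{\sigma,\frac\nu2}_\C(\psi^-)(\beta)\,\es$; applying $\Gamma_\theta$ and using $\theta Z=\theta^+\alpha\,\eu+\theta^-\beta\,\es$ gives
$$
\Gamma_\theta\BargBC\psi(Z)=\left(\Gamma_{\theta^+}\mathcal{B}^{\sigma,\frac\nu2}_\C\psi^+\right)(\alpha)\,\eu+\left(\Gamma_{\theta^-}\mathcal{B}^{\sigma,\frac\nu2}_\C\psi^-\right)(\beta)\,\es,
$$
where on the right $\Gamma_{\theta^\pm}$ denotes the complex dilation $g\mapsto g(\theta^\pm\cdot)$ on $\mathcal{F}^{2,\frac\nu2}(\C)$ (which is bounded precisely because $|\theta^\pm|=1$). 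Since the inverse $[\BargBC]^{-1}$ also respects the idempotent splitting --- this follows from the series representation \eqref{inverseExp} together with $E_n=e_n\eu+e_n\es$ and \eqref{spHbc} --- we obtain
$$
[\BargBC]^{-1}(\Gamma_\theta\BargBC\psi)(x)=[\mathcal{B}^{\sigma,\frac\nu2}_\C]^{-1}\!\left(\Gamma_{\theta^+}\mathcal{B}^{\sigma,\frac\nu2}_\C\psi^+\right)(x)\,\eu+[\mathcal{B}^{\sigma,\frac\nu2}_\C]^{-1}\!\left(\Gamma_{\theta^-}\mathcal{B}^{\sigma,\frac\nu2}_\C\psi^-\right)(x)\,\es.
$$
Comparing with the decomposition of $\mathcal{F}^\sigma_\theta\psi$ above, the bicomplex identity \eqref{tFourier} reduces, component by component, to the classical identity $\mathcal{F}^\sigma_{\theta^\pm}=[\mathcal{B}^{\sigma,\frac\nu2}_\C]^{-1}\circ\Gamma_{\theta^\pm}\circ\mathcal{B}^{\sigma,\frac\nu2}_\C$ on $L^{2,\sigma}(\R)$.

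Thus the remaining work is to establish this one complex identity, and I expect this to be the main (and only genuine) obstacle. The cleanest route is to verify it on the orthonormal basis of Hermite functions: by \eqref{basisbasis} in the scalar setting, $\mathcal{B}^{\sigma,\frac\nu2}_\C$ sends (a normalization of) $H_n^\sigma$ to $e_n(z)=z^n$, the dilation $\Gamma_{\theta^\pm}$ multiplies $z^n$ by $(\theta^\pm)^n$, and $[\mathcal{B}^{\sigma,\frac\nu2}_\C]^{-1}$ sends $z^n$ back to the Hermite function; hence the composite multiplies the $n$-th Hermite function by $(\theta^\pm)^n$, which is exactly the defining spectral property of the classical fractional Fourier transform $\mathcal{F}^\sigma_{\theta^\pm}$. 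Alternatively, one can compute the kernel of the composite directly: substituting the explicit kernels \eqref{KernelSBT1} and \eqref{inverseExpIntRep} (scalar versions), the $Z$-integral is a Gaussian integral over $\C$ whose evaluation collapses, after completing the square, to $\frac{c_0^\sigma}{\sqrt{1-(\theta^\pm)^2}}e^{-\frac{\sigma}{1-(\theta^\pm)^2}(x-\theta^\pm y)^2}$ --- precisely the kernel in \eqref{FrFourierT}; this is the bicomplex analog of Mehler's formula alluded to before the statement. Either way, reassembling the two components with $\eu+\es=1$ yields \eqref{tFourier} for all $\psi\in L^{2,\sigma}_{\BC}(\R)$.
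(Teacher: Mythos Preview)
Your proof is correct and takes a genuinely different route from the paper. You decompose \emph{both} sides of \eqref{tFourier} via the idempotents at the very outset, reducing the bicomplex identity to two copies of the classical complex statement $\mathcal{F}^\sigma_{\theta^\pm}=[\mathcal{B}^{\sigma,\frac\nu2}_\C]^{-1}\circ\Gamma_{\theta^\pm}\circ\mathcal{B}^{\sigma,\frac\nu2}_\C$ on $L^{2,\sigma}(\R)$, and then settle that scalar identity either spectrally on the Hermite basis or by a one--variable Gaussian integral. The paper, by contrast, stays in the bicomplex setting as long as possible: it writes $[\BargBC]^{-1}\Gamma_\theta\BargBC\psi$ as an inner product against the generating kernel $G^{\sigma,\nu}$, obtains a single bicomplex Gaussian integral over $\BC$ for the kernel $F^\sigma_\theta(x,y)$, and only then splits into idempotent components to apply the standard Gaussian integral formula (cf.\ \eqref{gaussIntegral}) --- recovering the bicomplex Mehler kernel in \eqref{MehlerkernelHnsigma} and matching \eqref{FrFourierT}. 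Your approach is more structural and economical, leaning fully on the fact that all the operators in play respect the idempotent splitting and that the complex endpoint is classical; the paper's approach is more self--contained, carrying out the kernel computation explicitly and thereby exhibiting the bicomplex Mehler formula (and the exact range of admissible $\theta$) as a by--product rather than an input. One small caveat: your ``cleanest route'' via the Hermite basis still implicitly needs the classical Mehler identity to know that the \emph{integral} operator in \eqref{FrFourierT} acts as multiplication by $(\theta^\pm)^n$ on $H_n^\sigma$; your alternative Gaussian--integral route avoids this circularity and is essentially the paper's computation done one complex component at a time.
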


\begin{proof}
Starting from the integral representations of $\BargBC $ and $[\BargBC]^{-1}$ given respectively by
$$
\BargBC(\psi)(Z) = \scal{ \psi,G^{\sigma,\nu}(\cdot;Z)}_{L^{2,\sigma}_{\BC}(\R)}
$$ and
$$
[\BargBC]^{-1}(f)(y) = \scal{f,(G^{\sigma,\nu}(y;\cdot))^*}_{\Hbc},
$$
we show that for every $\psi\in L^{2,\sigma}_{\BC}(\R)$ and $y\in \R$, we have
\begin{align*}
[\BargBC]^{-1} ( \Gamma_\theta \BargBC \psi )(y)
&=  \scal{ \scal{ \psi(\cdot\cdot) ,G^{\sigma,\nu}(\cdot\cdot;\theta \cdot )}_{L^{2,\sigma}_{\BC}(\R)}  , (G^{\sigma,\nu}(y;\cdot))^*}_{\Hbc}
\\&= \scal{ \psi(\cdot\cdot) , \scal{ G^{\sigma,\nu}(\cdot\cdot;\theta \cdot )   ,G^{\sigma,\nu}(y;\cdot)}_{\Hbc} }_{L^{2,\sigma}_{\BC}(\R)}.
\end{align*}
The involved kernel function $F^\sigma_\theta(x,y) := \scal{ G^{\sigma,\nu}(x;\theta \cdot)   ,G^{\sigma,\nu}(y;\cdot)}_{\Hbc}$ is independent of $\nu$ and can be computed explicitly using the explicit expression of $G^{\sigma,\nu}(x,Z)$ given through \eqref{GenFct}. Indeed, we obtain
\begin{align*}
F^\sigma_\theta(x,y) &=  c_T^{\nu}  \int_{\BC} e^{-\nu|Z|^2 -\frac{\nu}{4} Z^2 -\frac{\nu}{4} (\theta^*Z^*)^2 + \sqrt{\sigma \nu} y Z + \sqrt{\sigma \nu} x \theta^*Z^*}   d\lambda(Z)
\\&=  \frac{c_T^{\nu}}{4 c_1^{\frac\nu2} } \left( I_{\alpha_\theta}(x,y) e_+ + I_{\beta_\theta}(x,y) e_-\right),
\end{align*}
where $\theta =\alpha_\theta \eu + \beta_\theta \es$ and $I_{\xi}(x,y)$ stands for
$$ I_{\xi}(x,y) :=
 \int_{\C} e^{-\frac\nu 2|\zeta|^2 -\frac{\nu}{4} \zeta^2 -\frac{\nu}{4} \overline{\xi}^2 \overline{\zeta}^2 + \sqrt{\sigma \nu} y \zeta + \sqrt{\sigma \nu} x \overline{\xi} \overline{\zeta}}
  d\lambda(\zeta)  $$
  for $x,y\in\R$ and $\xi\in\C$.
Here, we recognize the integral formula
$$ \int_{\C} e^{-\gamma|\zeta|^2 + a \zeta^2 + b  \overline{\zeta}^2 + c \zeta + d \overline{\zeta}}
  d\lambda(\zeta) = \frac{\pi}{\sqrt{\gamma^2-4ab }} \exp\left(\frac{ad^2+bc^2+\gamma c d}{\gamma^2-4ab } \right)$$
valid for $|\Re(a+b)|<\gamma$, which implies in particular that $\gamma^2-4ab >0$. This can be seen as a particular case of the Gaussian integral \cite[p. 256]{Folland1989}
 \begin{align} \label{gaussIntegral}
  \int_{\R^n} e^{-a y A y + by} dy = \left(\dfrac{\pi^n}{a^n \sqrt{\det A}} \right)^{1/2} e^{\frac{1}{4 a} b A^{-1} b },
  \end{align}
 where we have the limitation $a>0$, $b\in \C^n$ and $A =(a_{mn})_{m,n} \in \C^{n\times n}$ is a symmetric $n$-matrix whose real part $\Re(A)=(\Re(a_{mn}))_{m,n}$ is positive definite.
 In our case, the limitation condition $|\Re(a+b)|<\gamma$ is equivalent to $|1+\Re(\alpha_\theta|<\gamma$ and $|1+\Re(\beta_\theta|<\gamma$ and therefore to $\theta \in \{\theta\in S^1\eu+S^1\es;  \, \theta\ne\pm 1;\pm ij\}$.
 Under this assumption, we have
\begin{align*}
F^\sigma_{\theta}(x,y)
&=  \frac{1}{c_1^{\frac\nu2} \sqrt{1 - (\theta^*)^2}}  \exp\left( \frac{- \sigma (\theta^*)^2 (x^2 + y^2) + 2 \sigma  \theta^* x y  }{1 - (\theta^*)^2}  \right).
\end{align*}
This completes our check for \eqref{tFourier} since $[\BargBC]^{-1} ( \Gamma_\theta \BargBC \psi )(x)$ reduces further to 
\begin{align*}
 \frac{c_0^{\sigma}}{\sqrt{1 - \theta^2}} e^{ - \frac{\sigma\theta^2}{1 - \theta^2}y^2}
\int_{\R} \psi (y)  e^{- \frac{\sigma}{1 - \theta^2}x^2 + 2  \frac{\sigma\theta}{1-\theta^2} xy } dx .
\end{align*}
\end{proof}

\begin{rem}
For the particular case $\theta =i$ (or also $j$, i.e. such that $\theta^2=-1$), with $\sigma=1$, the expression in \eqref{tFourier} reduces further to
the standard Fourier transform 
 \begin{align}\label{tFourier}
 \psi \longmapsto \left( y \longmapsto \frac{1}{\sqrt{2\pi}}  e^{\frac{y^2}{2}}
\int_{\R}   e^{ i x y  } \psi(x) e^{\frac{-y^2}{2}}  dx \right)
\end{align}
for every $\psi\in L^{2}_{\BC}(\R;e^{-x^2}dx)$ and $x\in \R$.
\end{rem}

\begin{rem}
 Using the generating nature of $G^{\sigma,\nu}$ involving the functions $\psi_n $ and $ \phi_n$ given by \eqref{Orthonbasis} and \eqref{bcbasis}, respectively, combined with the orthonormality of $\psi_n$ in $\Hbc$, we can rewrite the kernel $F^\sigma_{\theta}(x,y)$ as
\begin{align*}
F^\sigma_{\theta}(x,y) &= \sum_{n=0}^\infty \frac{\theta^n H_n^\sigma(x) H_n^\sigma(y)}{2^n\sigma^n n!}.
\end{align*}
The right-hand side is the bicomplex version of Mehler formula for the rescaled real Hermite polynomials
\begin{align}\label{MehlerkernelHnsigma}
\sum_{m=0}^\infty \frac{\theta^n H_n^\sigma(x) H_n^\sigma(y)}{2^n\sigma^n n!}
= \frac{1}{\sqrt{1 - \theta^2}}  \exp\left( \frac{- \sigma \theta^2 (x^2 + y^2) + 2 \sigma  \theta x y  }{1 - \theta^2}  \right)
\end{align}
valid for every fixed $\theta \in \BC$ such that $|\theta|<1$. It can be obtained easily form the classical one \cite{Mehler1866,Rainville71,Andrews}.
However, our approach shows that the expected formula is also valid for $\theta \in \{\theta\in S^1\eu+S^1\es;  \, \theta\ne\pm 1;\pm ij\}$.
\end{rem}

The uniqueness theorem as well as the Plancherel theorem and the inversion formula for the  bicomplex fractional  Fourier transform $\mathcal{F}^\sigma_\theta$ immediately follow in virtue of Theorem \ref{FourierBargmann}. Thus, we assert

\begin{cor}
If for given $\psi_1,\psi_2\in  L^{2,\sigma}_{\BC}(\R)$ we have $\mathcal{F}^\sigma_\theta \psi_1= \mathcal{F}^\sigma_\theta \psi_2$, then $\psi_1=\psi_2$.
\end{cor}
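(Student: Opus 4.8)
The plan is to deduce the uniqueness statement directly from the factorization \eqref{tFourier}, namely $\mathcal{F}^\sigma_\theta = [\BargBC]^{-1} \circ \Gamma_\theta \circ \BargBC$, by showing each of the three factors is injective on the relevant space. First I would observe that $\BargBC : L^{2,\sigma}_{\BC}(\R) \longrightarrow \Fbc$ is unitary by Theorem \ref{MainThm2}, hence in particular injective, and likewise $[\BargBC]^{-1} : \Fbc \longrightarrow L^{2,\sigma}_{\BC}(\R)$ is a (unitary, hence injective) bijection by Theorem \ref{MainThm2inverse}. So the only point requiring attention is the injectivity of the scaling operator $\Gamma_\theta f (Z) = f(\theta Z)$ on $\Fbc$, and this is where the constraint $\theta \ne \pm 1, \pm ij$ — equivalently $\theta$ having invertible idempotent components $\alpha_\theta,\beta_\theta \in S^1$ with $\alpha_\theta\beta_\theta \ne 0$ — is used.

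The key step is therefore to show $\Gamma_\theta$ is injective on $\Fbc$. I would argue via the idempotent decomposition of Theorem \ref{MainThm1}: if $f(\alpha\eu + \beta\es) = \phi^+(\alpha)\eu + \phi^-(\beta)\es$ with $\phi^\pm \in \mathcal{F}^{2,\frac{\nu}{2}}(\C)$, then since $\theta = \alpha_\theta\eu + \beta_\theta\es$ we have $\theta Z = \alpha_\theta\alpha\,\eu + \beta_\theta\beta\,\es$, whence $\Gamma_\theta f (Z) = \phi^+(\alpha_\theta\alpha)\eu + \phi^-(\beta_\theta\beta)\es$. If $\Gamma_\theta f \equiv 0$ then $\phi^+(\alpha_\theta\alpha) = 0$ for all $\alpha\in\C$ and $\phi^-(\beta_\theta\beta)=0$ for all $\beta\in\C$; because $\alpha_\theta \ne 0$ and $\beta_\theta \ne 0$ the maps $\alpha\mapsto\alpha_\theta\alpha$ and $\beta\mapsto\beta_\theta\beta$ are surjective on $\C$, so $\phi^+ \equiv \phi^- \equiv 0$, hence $f\equiv 0$. (In fact $\Gamma_\theta$ is a bijection of $\Fbc$, with inverse $\Gamma_{\theta^{-1}}$, $\theta^{-1} = \alpha_\theta^{-1}\eu + \beta_\theta^{-1}\es$ being well defined precisely under the stated hypothesis on $\theta$, but injectivity is all we need.)

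With these three injectivity facts in hand, the corollary is immediate: if $\mathcal{F}^\sigma_\theta\psi_1 = \mathcal{F}^\sigma_\theta\psi_2$ for $\psi_1,\psi_2\in L^{2,\sigma}_{\BC}(\R)$, then applying $\BargBC$ to the identity $[\BargBC]^{-1}(\Gamma_\theta\BargBC\psi_1) = [\BargBC]^{-1}(\Gamma_\theta\BargBC\psi_2)$ and using that $\BargBC\circ[\BargBC]^{-1}$ is the identity gives $\Gamma_\theta\BargBC\psi_1 = \Gamma_\theta\BargBC\psi_2$; injectivity of $\Gamma_\theta$ on $\Fbc$ yields $\BargBC\psi_1 = \BargBC\psi_2$; and injectivity of $\BargBC$ finally gives $\psi_1 = \psi_2$. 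Alternatively, and perhaps more cleanly, one can invoke the coefficient expansion: writing $\psi_k = \sum_n c_n^{(k)}\psi_n^\sigma$, one has $\mathcal{F}^\sigma_\theta\psi_k = [\BargBC]^{-1}(\Gamma_\theta\sum_n c_n^{(k)}\phi_n) = \sum_n \theta^n c_n^{(k)}\psi_n^\sigma$ (using $\Gamma_\theta\phi_n = \theta^n\phi_n$ and \eqref{basisbasis}), so equality of the transforms forces $\theta^n c_n^{(1)} = \theta^n c_n^{(2)}$ for all $n$, and since $|\theta|=1$ with $\theta$ not a zero divisor, $\theta^n$ is invertible in $\BC$, giving $c_n^{(1)} = c_n^{(2)}$ for all $n$ and hence $\psi_1 = \psi_2$. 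I do not expect any genuine obstacle here; the only subtlety worth stating explicitly is that the hypothesis $\theta\ne\pm1,\pm ij$ guarantees $\theta$ is invertible (not a zero divisor), which is exactly what makes $\Gamma_\theta$ — equivalently, multiplication by $\theta^n$ — injective.
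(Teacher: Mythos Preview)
Your proposal is correct and aligns with the paper, which gives no separate proof but simply states that uniqueness (together with the Plancherel and inversion results) follows immediately from Theorem~\ref{FourierBargmann}, i.e., from the factorization $\mathcal{F}^\sigma_\theta = [\BargBC]^{-1}\circ\Gamma_\theta\circ\BargBC$; your argument is exactly the natural unpacking of that claim, and your alternative coefficient-expansion route is essentially the companion fact $\mathcal{F}^\sigma_\theta\psi_n=\theta^n\psi_n$ recorded in the next corollary. One small correction: the injectivity of $\Gamma_\theta$ only requires $\alpha_\theta,\beta_\theta\ne 0$, which is already automatic from $\theta\in S^1\eu+S^1\es$ --- the further exclusion $\theta\ne\pm1,\pm ij$ is imposed so that $1-\theta^2$ is nonzero in the defining formula \eqref{FrFourierT}, not for the injectivity of $\Gamma_\theta$ itself.
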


\begin{cor}
For every $\psi \in  L^{2,\sigma}_{\BC}(\R)$, we have $\norm{\mathcal{F}^\sigma_\theta \psi}_{L^{2,\sigma}_{\BC}(\R)}= \norm{\psi}_{L^{2,\sigma}_{\BC}(\R)}$ and $\mathcal{F}^\sigma_\theta \psi_n=\theta^n \psi_n$.
\end{cor}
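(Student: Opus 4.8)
The plan is to derive both the Plancherel (isometry) property and the eigenfunction relation $\mathcal{F}^\sigma_\theta \psi_n = \theta^n \psi_n$ directly from the factorization $\mathcal{F}^\sigma_\theta = [\BargBC]^{-1}\circ \Gamma_\theta \circ \BargBC$ established in Theorem \ref{FourierBargmann}, together with the already-proven facts that $\BargBC$ and $[\BargBC]^{-1}$ are unitary between $L^{2,\sigma}_{\BC}(\R)$ and $\Fbc$ (Theorems \ref{MainThm2} and \ref{MainThm2inverse}) and the basis correspondence \eqref{basisbasis}, i.e. $\BargBC(\psi_n^\sigma) = \phi_n$. First I would observe that $\Gamma_\theta$ acts diagonally on the orthonormal basis $\phi_n$ of $\Fbc$: since $\phi_n(Z) = (\nu^n/2^n n!)^{1/2} Z^n$, we get $\Gamma_\theta \phi_n(Z) = \phi_n(\theta Z) = \theta^n \phi_n(Z)$, using the idempotent multiplicativity $Z^n = \alpha^n \eu + \beta^n \es$ and $(\theta Z)^n = \theta^n Z^n$ for commuting bicomplex numbers. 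Because $|\theta| = 1$ componentwise (i.e. $|\alpha_\theta| = |\beta_\theta| = 1$), each scalar $\theta^n$ satisfies $|\theta^n| = 1$, so $\Gamma_\theta$ preserves the $\BC$-norm $\norm{\cdot}_{\Hbc}$ on $\Fbc$; concretely, if $f = \sum_n c_n \phi_n$ then $\Gamma_\theta f = \sum_n \theta^n c_n \phi_n$ and the idempotent components of the norm are unchanged since $|\theta^n c_n^\pm| = |c_n^\pm|$.

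Next I would combine these ingredients. For the eigenfunction statement, applying the factorization to $\psi_n^\sigma$ gives
\begin{align*}
\mathcal{F}^\sigma_\theta \psi_n^\sigma = [\BargBC]^{-1}\bigl(\Gamma_\theta \BargBC \psi_n^\sigma\bigr) = [\BargBC]^{-1}\bigl(\Gamma_\theta \phi_n\bigr) = [\BargBC]^{-1}\bigl(\theta^n \phi_n\bigr) = \theta^n [\BargBC]^{-1}(\phi_n) = \theta^n \psi_n^\sigma,
\end{align*}
where the last equality is again \eqref{basisbasis} (the inverse of the basis correspondence), and where pulling $\theta^n$ through $[\BargBC]^{-1}$ is legitimate because $[\BargBC]^{-1}$ is $\BC$-linear. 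For the Plancherel identity, I would note that $\mathcal{F}^\sigma_\theta$ is a composition of three maps, two of which are unitary isometries (by Theorems \ref{MainThm2} and \ref{MainThm2inverse}) and the middle one $\Gamma_\theta$ being norm-preserving on $\Fbc$ by the computation just described; hence $\norm{\mathcal{F}^\sigma_\theta \psi}_{L^{2,\sigma}_{\BC}(\R)} = \norm{\Gamma_\theta \BargBC \psi}_{\Hbc} = \norm{\BargBC \psi}_{\Hbc} = \norm{\psi}_{L^{2,\sigma}_{\BC}(\R)}$ for every $\psi \in L^{2,\sigma}_{\BC}(\R)$.

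Alternatively, and perhaps more cleanly, one can prove Plancherel as a consequence of the eigenfunction relation: since $\{\psi_n^\sigma\}$ is an orthonormal basis of $L^{2,\sigma}_{\BC}(\R)$, writing $\psi = \sum_n c_n \psi_n^\sigma$ gives $\mathcal{F}^\sigma_\theta \psi = \sum_n \theta^n c_n \psi_n^\sigma$, and since the $\BC$-norm is computed through the idempotent components with $|\theta^n| = 1$, one gets $\norm{\mathcal{F}^\sigma_\theta \psi}^2 = \norm{\psi}^2$ termwise. The main (and really only) subtlety to be careful about is the $\BC$-module bookkeeping: one must check that $\theta^n$ is not a zero divisor and that multiplication by $\theta$ genuinely preserves the norm on each idempotent slice — this is where the hypothesis $\theta \in S^1\eu + S^1\es$, $\theta \ne \pm 1, \pm ij$ is used (the $|\theta_\pm| = 1$ part guarantees isometry; the exclusions are inherited from the well-definedness of $\mathcal{F}^\sigma_\theta$ in Theorem \ref{FourierBargmann} and play no further role here). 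Everything else is formal manipulation with the already-established unitarity and basis correspondences, so I do not anticipate a genuine obstacle.
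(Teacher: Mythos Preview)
Your proposal is correct and matches the paper's approach: the paper states this corollary without a written proof, declaring that it ``immediately follow[s] in virtue of Theorem \ref{FourierBargmann},'' and your argument simply spells out the implicit reasoning, namely that $\Gamma_\theta$ is a $\BC$-isometry of $\Fbc$ acting as $\phi_n \mapsto \theta^n \phi_n$, so that the factorization $\mathcal{F}^\sigma_\theta = [\BargBC]^{-1}\,\Gamma_\theta\,\BargBC$ together with the unitarity of $\BargBC$ and the basis correspondence \eqref{basisbasis} yields both claims. There is nothing to correct.
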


\begin{cor}
The inversion formula for $\mathcal{F}^\sigma_\theta$ is the FrFT with the parameter $\theta^*$. More explicitly
\begin{align}  \label{FrFourierTinverse}
[\mathcal{F}^\sigma_\theta]^{-1}( \psi)(x) = 
 \frac{c_0^{\sigma}}{\sqrt{1 - (\theta^*)^2}} 
\int_{\R} \psi (y)  e^{- \frac{\sigma}{1 - (\theta^*)^2}(y - \theta^*x)^2 } dy 
\end{align}
for every $\psi \in L^{2,\sigma}_{\BC}(\R)$.
\end{cor}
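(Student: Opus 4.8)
The plan is to read the inverse off the operator identity established in Theorem~\ref{FourierBargmann}, namely
$$\mathcal{F}^\sigma_\theta = [\BargBC]^{-1}\circ \Gamma_\theta \circ \BargBC,\qquad \Gamma_\theta f(Z):=f(\theta Z).$$
Since, by Theorems~\ref{MainThm2} and \ref{MainThm2inverse}, the bicomplex Segal--Bargmann transform $\BargBC$ is a bijection between $L^{2,\sigma}_{\BC}(\R)$ and $\Fbc$ with explicitly known inverse, inverting $\mathcal{F}^\sigma_\theta$ reduces to inverting the dilation operator $\Gamma_\theta$ on $\Fbc$. First I would verify that $\Gamma_\theta$ is a well-defined bijection (indeed a unitary) of $\Fbc$: writing $\theta=\alpha_\theta\eu+\beta_\theta\es$ with $\alpha_\theta,\beta_\theta\in S^1$ and decomposing $f=\phi^+\eu+\phi^-\es$ as in Theorem~\ref{MainThm1}, one gets $\Gamma_\theta f=\phi^+(\alpha_\theta\,\cdot)\,\eu+\phi^-(\beta_\theta\,\cdot)\,\es$; each component is a holomorphic function precomposed with a rotation of $\C$, hence stays in $\mathcal{F}^{2,\frac{\nu}{2}}(\C)$ with unchanged norm by the rotation-invariance of the Gaussian weight. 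Thus $\Gamma_\theta$ is invertible with $\Gamma_\theta^{-1}=\Gamma_{\theta^{-1}}$, where $\theta^{-1}=\overline{\alpha_\theta}\,\eu+\overline{\beta_\theta}\,\es$.

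The second step is the elementary identity $\theta^{-1}=\theta^*$ valid when $|\theta|=1$. Indeed, from the idempotent form of the $*$-conjugate, $Z^*=\overline{\alpha}\eu+\overline{\beta}\es$ for $Z=\alpha\eu+\beta\es$, we get $\theta^*=\overline{\alpha_\theta}\eu+\overline{\beta_\theta}\es$, which equals $\theta^{-1}$ precisely because $|\alpha_\theta|=|\beta_\theta|=1$. Moreover the forbidden set $\{\pm1,\pm ij\}$ is stable under $*$-conjugation ($1^*=1$ and $(ij)^*=ij$ since $ij=\eu-\es$), so the hypothesis $\theta\ne\pm1,\pm ij$ forces $\theta^*\ne\pm1,\pm ij$, and therefore $\mathcal{F}^\sigma_{\theta^*}$ is itself well-defined on $L^{2,\sigma}_{\BC}(\R)$ by \eqref{FrFourierT}.

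Combining the two steps,
$$[\mathcal{F}^\sigma_\theta]^{-1}=[\BargBC]^{-1}\circ\Gamma_\theta^{-1}\circ\BargBC=[\BargBC]^{-1}\circ\Gamma_{\theta^*}\circ\BargBC,$$
and the right-hand side is, by Theorem~\ref{FourierBargmann} applied with the admissible parameter $\theta^*$ in place of $\theta$, exactly the transform $\mathcal{F}^\sigma_{\theta^*}$. Substituting $\theta^*$ for $\theta$ in the defining formula \eqref{FrFourierT} and exchanging the names of the free variable and the integration variable yields the asserted expression \eqref{FrFourierTinverse}. I do not expect any genuine obstacle here, since this is truly a corollary of Theorem~\ref{FourierBargmann}; the only points needing a short argument are the well-definedness and unitarity of $\Gamma_\theta$ on $\Fbc$ (rotation-invariance in each idempotent slot) and the algebraic facts $\theta^{-1}=\theta^*$ together with the $*$-stability of $\{\pm1,\pm ij\}$.
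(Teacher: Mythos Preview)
Your argument is correct and follows essentially the same route as the paper: the paper also derives the inversion formula directly from the factorization $\mathcal{F}^\sigma_\theta=[\BargBC]^{-1}\Gamma_\theta\BargBC$ of Theorem~\ref{FourierBargmann} together with $\Gamma_\theta^{-1}=\Gamma_{\theta^*}$ (and mentions the equivalent semi-group formulation $\mathcal{F}^\sigma_\theta\circ\mathcal{F}^\sigma_\varrho=\mathcal{F}^\sigma_{\theta\varrho}$). You are in fact more explicit than the paper in checking that $\Gamma_\theta$ is a unitary of $\Fbc$ via the idempotent decomposition and rotation-invariance, and in verifying that the excluded set $\{\pm1,\pm ij\}$ is $*$-stable so that $\theta^*$ remains admissible.
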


  \begin{rem}
The inversion formula in \eqref{FrFourierTinverse} follows since
$$[\mathcal{F}^\sigma_\theta]^{-1} = [\BargBC]^{-1} \Gamma_{\theta^*} \BargBC $$
for every fixed bicomplex $|\theta|=1$ with $\theta\ne \pm 1, \pm ij$. It can also be seen as an immediate consequence of the semi-group property $\mathcal{F}^\sigma_\theta \circ \mathcal{F}^\sigma_{\varrho}= \mathcal{F}^\sigma_{\theta\varrho}$ which readily follows from  Theorem \ref{FourierBargmann} and the fact that $\Gamma_{\theta_1}\circ \Gamma_{\theta_2} = \Gamma_{\theta_1\theta_2}$.
\end{rem}

\section{Concluding remarks}
  \begin{rem}
  In our investigation, the condition $|\theta|=1$, with $\theta \ne \pm 1,\pm ij$,  is needed to deal with the bicomplex analog of the classical fractional Fourier transform. However, the kernel function $T^{\sigma}_\theta(x,y)$ in \eqref{MehlerkernelHnsigma},
obtained by means of the bicomplex version of Mehler formula,  can be used to define the integral transform
  $$\widetilde{\mathcal{F}^\sigma_\theta}(\varphi)(y) = \int_\R \varphi(x) T^{\sigma}_\theta(x,y) dx $$
for $y\in \R$, $\theta\in \BC$ such that $|\theta|<1$ and $\varphi \in L^{2,\sigma}_{\BC}(\R)$.
In this case, the connection to the bicomplex Segal--Bargmann transform can be shown to be given by
$$\widetilde{\mathcal{F}^\sigma_\theta}  =   [\mathcal{B}^{\sigma,\nu \gamma_\theta}_{\BC}]^{-1} \Gamma_\theta  \mathcal{B}^{\sigma,\nu}_{\BC}$$
for some positive real $\gamma_\theta$ depending only on $\theta$.
\end{rem}

 \begin{rem}
 The authors of \cite{DeBieStruppaVajiacVajiac2012} define the Fourier transform for bicomplex holomorphic functions as the Cauchy-Kowalewski extension of the classical one on the real line (similarly as it has been done in the setting of Clifford analysis in $\R^m$ in \cite{LiMcIntoshQian1994}), namely for given  bicomplex holomorphic function $F(Z)$, the bicomplex Fourier transform is given by
$$\mathcal{F}_{\BC}(F)(Z) = \frac{1}{\sqrt{2\pi}} \int_{\R} e^{-i x Z} F|_{\R}(x) dx .$$
By proceeding in a similar way, we define the Cauchy-Kowalewski extension of the BFrFT for bicomplex holomorphic functions by considering
\begin{align*}  \label{Cauchy-KowalewskiFrFourierT}
\mathcal{F}^{\sigma,\theta}_{\BC} F (Z) =  
 \frac{c_0^{\sigma}}{\sqrt{1 - \theta^2}} e^{ - \frac{\sigma\theta^2}{1 - \theta^2}Z^2}
\int_{\R} F|_{\R}(x)  e^{- \frac{\sigma}{1 - \theta^2}x^2 + 2  \frac{\sigma\theta}{1-\theta^2} x Z} dx .
\end{align*}
The kernel function of the integral transform $\mathcal{F}^{\sigma,\theta}_{\BC}$ is closely connected to the bicomplex version of the bilinear Mehler formula involving the product of the rescaled real Hermite polynomials $ H_n^{\sigma}(x)$ in \eqref{HermitPol}
  and the bicomplex holomorphic Hermite polynomials
  $H_n^\sigma(Z)$ obtained from the first one by replacing $y$ by the bicomplex $Z$, being indeed
\begin{equation}\label{MehlerkernelHnsigma}
\sum_{n=0}^\infty \frac{\theta^n H_n^\sigma(Z) H_n^\sigma(y)}{2^n\sigma^n n!}
= \frac{1}{\sqrt{1 - \theta^2}}  \exp\left( \frac{- \sigma \theta^2 (Z^2 + y^2) + 2 \sigma  \theta y Z }{1 - \theta^2}  \right).
\end{equation}
Basic properties of this transform will be discussed in a forthcoming investigation.
\end{rem}

 \begin{rem}
  By applying the approach, we have adopted in introducing the bicomplex fractional Fourier transform $\mathcal{F}^\sigma_\theta$ to the variant of the bicomplex Segal--Bargmann transform $\mathcal{\BargBC}$ in \eqref{BC:VSBT}, we are able to define and study a bicomplex fractional Fourier transform on the bireal set $\D$. We hope to return to this point in a near future.
\end{rem}

\noindent{\bf Acknowledgments:}
A part of this work is done during the visit of the first author to Dipartimento di Matematica Politecnico di Milano (May--June 2017). 
The assistance of the members of "Ahmed Intissar's seminar on Analysis, Partial Differential Equations and Spectral Geometry" is gratefully acknowledged.

\end{document}